\newtheorem{theorem}{Theorem}[section]
\newtheorem{proposition}[theorem]{Proposition}
\newtheorem{corollary}[theorem]{Corollary}
\newtheorem{remark}[theorem]{Remark}
\newtheorem{lemma}[theorem]{Lemma}
\newtheorem{example}[theorem]{Example}
\newtheorem{definition}[theorem]{Definition}
\numberwithin{equation}{section}
\begin{document}\title[ Sania Asif\textsuperscript{1}, Lipeng Luo\textsuperscript{2}, Yanyong Hong\textsuperscript{3} and Zhixiang Wu\textsuperscript{4}
	]{Conformal triple derivations and triple homomorphisms of Lie conformal algebras}
\author{Sania Asif\textsuperscript{1}, Lipeng Luo\textsuperscript{2}, Yanyong Hong\textsuperscript{3} and Zhixiang Wu\textsuperscript{4}}

\address{\textsuperscript{1}Department of Mathematics, Naning University of Information Science and Technology, Nanjing, Jiangsu Province, 210044, PR China.}
\address{\textsuperscript{2}Department of Mathematics, Tongji University, Shanghai, 200092, PR China.}
\address{\textsuperscript{3}Department of Mathematics, Hangzhou Normal University, Hangzhou, 311121, PR China.}
\address{\textsuperscript{4}Department of Mathematics, Zhejiang University, Hangzhou, Zhejiang Province, 310027, PR China.}

\email{\textsuperscript{1}11835037@zju.edu.cn}
\email{\textsuperscript{2}luolipeng1101@163.com}
\email{\textsuperscript{3}hongyanyong2008@yahoo.com}
\email{\textsuperscript{4}wzx@zju.edu.cn}

\keywords{triple derivation, triple homomorphism, conformal algebra}
\subjclass[2000]{Primary 11R52, 15A99, 17B67,17B10, Secondary 16G30}


\date{\textbf{Submitted:} 05 Feburary 2021\\ \textbf{Accepted:} 19 November 2021}
\begin{abstract}
	Let $\mathcal{R}$ be a finite Lie conformal algebra. In this paper, we first investigate the conformal derivation algebra $CDer(\mathcal{R})$, the conformal triple derivation algebra $CTDer(\mathcal{R})$ and the generalized conformal triple derivation algebra $GCTDer(\mathcal{R})$. Mainly, we focus on the connections among these derivation algebras. Next, we give a complete classification of (generalized) conformal triple derivation algebras on all finite simple Lie conformal algebras. In particular, $CTDer(\mathcal{R})= CDer(\mathcal{R})$, where $\mathcal{R}$ is a finite simple Lie conformal algebra. But for $GCDer(\mathcal{R})$, we obtain a conclusion that is closely related to $CDer(\mathcal{R})$. Finally, we introduce the definition of triple homomorphism of a Lie conformal algebra. Furthermore, triple homomorphisms of all finite simple Lie conformal algebras are also characterized.
\end{abstract}

\footnote{The second author is the corresponding author.}
\maketitle

\section{Introduction}\label{intro}
In the last few years, many significant researches have been done in (generalized) derivations of both Lie algebras and their generalizations. It is well known that the research on (generalized) derivations of Lie algebras made a contribution on the development of structure theories of Lie algebras. In particular, many authors have investigated the generalized derivations of non-associative algebras, Lie algebras, Lie superalgebras, Lie color algebras, Lie triple system, Hom Lie triple system and n-Hom Nambu Lie algebras in \cite{Chen-Ma-Ni, HoN, LL, ZZ, Zhou-Chen, Zhou-Chen-Ma-2, Zhou-Chen-Ma-3}. Leger and Lucks have investigated the generalized derivations of Lie algebras and their subalgebras. They have proposed many appealing properties of the generalized derivation algebras and their subalgebras in their work. There are generalizations of derivation such as biderivation and triple derivation. Many authors have investigated the biderivations of the simple generalized Witt algebra, Block Lie algebras and the Schr\"{o}dinger-Virasoro Lie algebra in \cite{CZ, Liu-Guo-Zhao, Wang-Yu}.
\par
In \cite{MD}, M\"{u}ller first introduced triple derivation of Lie algebra as a generalization of derivation, where it is also called as prederivation. Triple derivation of Lie algebra is an analogy of triple derivation of associative algebra and Jordan algebra. It is not difficult to check that, for any Lie algebra, every derivation is a Lie triple derivation, but the converse does not always hold. In recent years, many authors have investigated Lie triple derivations on nest algebras, TUHF algebras, triangular algebras, perfect Lie algebras and perfect Lie superalgebras, Lie color algebras, quaternions algebra in \cite{AW, AWM, Ji-Wang, Lu,  Xiao-Wei, ZJD, Zhou-Chen-Ma-1}. They have obtained many rigorous results from these researches. Lie triple derivations of the Lie algebra of strictly upper triangular matrix over a commutative ring were posed by Wang and Li in \cite{Li-Wang}. In \cite{Li-Wang-Guo}, generalized Lie triple derivations for $gl(n,\mathcal{R})$ were characterized by Li, Wang and Guo. Several problems on Lie derivations, Jordan derivations and homomorphisms of associative algebras were proposed by Herstein in \cite{HN}. There are some derivations on associative algebras proposed by many authors, such as associative triple derivations and Jordan triple derivations in \cite{BC, HN, SGA}.
\par In this paper, we will mention conformal derivations, (generalized) conformal triple derivations on a Lie conformal algebra introduced by Kac in \cite{KacF, KacV}, which is a generalization of Lie algebra. It has been shown that the theory of Lie conformal algebras has close connections to infinite-dimensional Lie algebras satisfying the locality property in \cite{KacL}. Virasoro Lie conformal algebra $Vir$ and current Lie conformal algebra $Cur\mathcal{G}$ associated to a Lie algebra $\mathcal{G}$  are two important examples of Lie conformal algebras. It was shown in \cite{DK} that $Vir$ and all current Lie conformal algebras $Cur\mathcal{G}$, where $\mathcal{G}$ is a finite dimensional simple Lie algebra exhaust all finite simple Lie conformal algebras. Conformal derivations, conformal quasiderivations and generalized conformal derivations of all finite simple Lie conformal algebras were determined by Fan, Hong and Su in \cite{Fan-Hong-Su}. In \cite{Zhao-Chen-Yuan, Zhao-Yuan-Chen}, all kinds of these derivations of Lie conformal superalgebras and Hom-Lie conformal algebras were proposed by Zhao, Chen and Yuan. In this paper, we will classify (generalized) conformal triple derivations for all finite simple Lie conformal algebras.
\par Besides, studies on homomorphisms of Lie algebras and conformal algebras are also very interesting. Many authors have studied the relations of homomorphisms, anti-homomorphisms, Lie homomorphisms and Lie triple homomorphisms. In some suppositions, it turns out that Lie triple homomorphism of associative rings, can be decomposed into the direct sum of associative homomorphisms, associative anti-homomorphisms, Lie homomorphisms and Lie triple homomorphisms and some evident mappings (see \cite{BM, JR, MCR}). Triple homomorphisms of perfect Lie algebras and Lie superalgebras were investigated in \cite{ZJH, Zhou-Chen-Ma-1}. In \cite{BM}, Bresar gave a characterization of Lie triple isomorphisms associated to certain associative algebras. In this paper, we introduce the definition of triple homomorphism of a Lie conformal algebra. Furthermore, the triple homomorphisms of all finite simple Lie conformal algebras are discussed.
\par
The rest of the paper is organized as follows. In Section 2, we introduce basic definitions, notations and some well known results about Lie conformal algebras. In Section 3, the definition and notation of (generalized) conformal triple derivations on Lie conformal algebras are introduced at first. Then we investigate the conformal algebra structure and some properties of these derivations. In Section 4, the (generalized) conformal triple derivations of all simple Lie conformal algebras are classified. In Section 5, we first introduce the definition of triple homomorphism of a Lie conformal algebras. Afterwards, the triple homomorphisms of all finite simple Lie conformal algebras are characterized.
\par Throughout this paper, we use notations $\mathbb{C}$,  $\mathbb{Z}$ and $\mathbb{Z^+}$ to represent the set of complex numbers, integers and nonnegative integers, respectively. In addition, all vector spaces and tensor products are over $\mathbb{C}$. In the absence of ambiguity, we abbreviate $\otimes_{\mathbb{C}}$ by $\otimes$.

\section{preliminaries}
In this section, we recall some basic definitions, notations and related results about Lie conformal algebras for later use. For a detailed description, one can refer to \cite{DK, KacV}.
\begin{definition}\label{def2.1}
	\em
		{A \emph {Lie conformal algebra} $\mathcal{R}$ is a $\mathbb{C}[\partial]$-module endowed with a $\mathbb{C}$-linear map from $\mathcal{R} \otimes \mathcal{R}$ to $\mathbb{C}[\lambda] \otimes \mathcal{R}, a \otimes b \mapsto [a_\lambda b]$, called the $\lambda$-bracket, satisfying the following axioms:
		\begin{align}
		[\partial a_\lambda b]&= -\lambda[a_\lambda b], \quad  [ a_\lambda \partial b]= (\partial+ \lambda)[a_\lambda b] \quad (conformal \  sesquilinearity),\\
		{}[a_\lambda b]&= -[b_{-\lambda- \partial} a]\quad (skew\text{-}symmetry),\\
		{}[a_\lambda [b_\mu c]]&= [[a_\lambda b]_{\lambda+ \mu} c]+ [ b_\mu [a_\lambda c]]\quad (Jacobi\  identity),
		\end{align}
		for $a, b, c \in\mathcal{R}$.}
	
\end{definition}

The notions of subalgebras, ideals, quotients and homomorphisms of Lie conformal algebras are obvious. Moreover, due to the skew\text{-}symmetry, any left or right ideal is actually a two-side ideal. A Lie conformal algebra $\mathcal{R}$ is called \emph {finite} if $\mathcal{R}$ is finitely generated as a $\mathbb{C}[\partial]$-module. The \emph {rank} of a Lie conformal algebra $\mathcal{R}$, denoted by rank($\mathcal{R}$), is its rank as a $\mathbb{C}[\partial]$-module. A Lie conformal algebra $\mathcal{R}$ is \emph{simple} if it has no non-trivial ideals and it is not abelian.

Virasoro conformal algebra $Vir$ and current Lie conformal algebra $Cur\mathcal{G}$ associated to a Lie algebra $\mathcal{G}$  are two important examples of Lie conformal algebras.

\begin{example}\label{ex2.2}
	\em{
		The conformal algebra $Vir$ is a free $\mathbb{C}[\partial]$-module on the generator $L$, which is defined as follows
		\begin{align*}
		Vir= \mathbb{C}[\partial]L, \quad [L_\lambda L]= (\partial+ 2\lambda)L.
		\end{align*}}

\end{example}
\begin{example}\label{ex2.3}
	\em{
		Let $\mathcal{G}$ be a Lie algebra. The current Lie conformal algebra associated to $\mathcal{G}$ is defined as follows
		\begin{align*}
		Cur\mathcal{G}= \mathbb{C}[\partial] \otimes \mathcal{G}, \quad [a_\lambda b]= [a, b],
		\end{align*}
		for all $a, b \in \mathcal{G}$.}
	
\end{example}
It was shown in \cite{DK} that $Vir$ and all current Lie conformal algebras $Cur\mathcal{G}$ where $\mathcal{G}$ is a finite dimensional simple Lie algebra exhaust all finite simple Lie conformal algebras. In this paper, we will classify conformal triple derivations and triple homomorphisms of all finite simple Lie conformal algebras.

\begin{definition}\label{def2.4}
	\em{
		A \emph {conformal linear map} between $\mathbb{C}[\partial]$-modules $\mathcal{A}$ and $\mathcal{B}$ is a $\mathbb{C}$-linear map $\phi_{\lambda}$: $\mathcal{A} \to \mathbb{C}[\lambda] \otimes \mathcal{B}$, satisfying the following axiom:
		\begin{align*}
		\phi_{\lambda}(\partial a)= (\partial+ \lambda)\phi_{\lambda}(a), \quad \forall  a\in\mathcal{A}.
		\end{align*}}
			\end{definition}
Obviously, $\phi_{\lambda}$ does not depend on the choice of the indeterminate variable $\lambda$. The vector space of all conformal linear maps from $\mathcal{A}$ to $\mathcal{B}$ is denoted by $Chom(\mathcal{A}, \mathcal{B})$, which can be made into a $\mathbb{C}[\partial]$-module via $(\partial \phi)_{\lambda}(a)= -\lambda\phi_{\lambda}(a), \forall  a\in\mathcal{A}$. For convenience, we write $Cend{\mathcal{A}}$ for $Chom(\mathcal{A}, \mathcal{A})$.

\begin{definition}\label{def2.5}
	\em{Let $\mathcal{R}$ be a Lie conformal algebra. A conformal linear map $\phi_{\lambda}$: $\mathcal{R} \to \mathbb{C}[\lambda]\otimes\mathcal{R}$ is a \emph {conformal derivation} of $\mathcal{R}$ if
		\begin{align*}
		\phi_{\lambda}([a_{\mu}b])= [\phi_{\lambda}(a)_{\lambda+ \mu}b]+ [a_{\mu}\phi_{\lambda}(b)], \quad \forall a, b \in \mathcal{R}.
		\end{align*}}
		\end{definition}

By the Jacobi identity, it is not difficult to check that for every $a \in \mathcal{R}$, the map $ad a _{\lambda}$ which is defined by $ad a _{\lambda} b= [a_{\lambda} b]$ for any $b \in \mathcal{R}$, is a conformal derivation of $\mathcal{R}$. All conformal derivations of this kind are called \emph{inner conformal derivations}. Denote by $CDer(\mathcal{R})$ and $CInn(\mathcal{R})$ the vector spaces of all conformal derivations and inner conformal derivations of $\mathcal{R}$ respectively.

There is an important example of a non-inner conformal derivation defined as follows,
\begin{example}\label{ex2.6}
	\em{Let $Cur\mathcal{G}$ be the current Lie conformal algebra associated to the finite dimensional Lie algebra $\mathcal{G}$. Define a conformal linear map $d^L_{\lambda}$: $Cur\mathcal{G} \to Cur\mathcal{G}$ by $d^L_{\lambda}a= (\partial+ \lambda)a$ for every $a \in \mathcal{G}$. It is not difficult to check that $d^L_{\lambda}$ is a conformal derivation.}

\end{example}

\begin{definition}\label{def2.7}
	\em{Let $\mathcal{R}$ be a finite Lie conformal algebra. The $\lambda$-bracket on $Cend(\mathcal{R})$ is given by
	\begin{align*}
	[\phi_{\lambda}\psi]_{\mu}a= \phi_{\lambda}(\psi_{\mu- \lambda}a)- \psi_{\mu- \lambda}(\phi_{\lambda}a), \quad \forall a \in \mathcal{R},
	\end{align*}
	defines a Lie conformal algebra structure on $Cend(\mathcal{R})$. This is called the \emph{general Lie conformal algebra} on $\mathcal{R}$ which is denoted by $gc(\mathcal{R})$.}

\end{definition}

Inspired by the above definition, it is easy to check that $CDer(\mathcal{R})$ and $CInn(\mathcal{R})$ are Lie conformal subalgebras of $gc(\mathcal{R})$, where $\mathcal{R}$ is a finite Lie conformal algebra. Moreover, we have the following tower $CInn(\mathcal{R})\subseteq CDer(\mathcal{R})\subseteq gc(\mathcal{R})$.

\section{Conformal triple derivations on Lie conformal algebras}

In this section, we introduce the definition and notation of (generalized) conformal triple derivations on Lie conformal algebras at first. Then we investigate the conformal algebra structure and some properties of these derivations. For convenience, we always assume that $\mathcal{R}$ is a finite Lie conformal algebra in this section unless otherwise specified.

\begin{definition}\label{def3.1}
	\em{
		A conformal linear map $\phi_{x} \in gc(\mathcal{R})$ is called a \emph{conformal triple derivation} of $\mathcal{R}$ if it satisfies the following axiom:
		\begin{align}\label{eq3.1}
		\phi_x([[a_{\lambda}b]_{\lambda+ \mu}c])= [[\phi_x(a)_{\lambda+ x}b]_{\lambda+ \mu+x}c]+ [[a_{\lambda}\phi_x(b)]_{\lambda+ \mu+ x}c]+ [[a_{\lambda}b]_{\lambda+ \mu}\phi_x(c)],
		\end{align}
		for any $a,b,c \in \mathcal{R}$.
		
		A conformal linear map $\phi_{x} \in gc(\mathcal{R})$ is said to be a \emph{generalized conformal triple derivation} of $\mathcal{R}$ if there exists a conformal triple derivation $\tau_x$ of $\mathcal{R}$ such that
		\begin{align}\label{eq3.2}
		\phi_x([[a_{\lambda}b]_{\lambda+ \mu}c])= [[\phi_x(a)_{\lambda+ x}b]_{\lambda+ \mu+ x}c]+ [[a_{\lambda}\tau_x(b)]_{\lambda+ \mu+ x}c]+ [[a_{\lambda}b]_{\lambda+ \mu}\tau_x(c)],
		\end{align}
		for any $a, b, c \in \mathcal{R}$, and $\tau_x$ is called the \emph{conformal triple derivation} related to $\phi_x$. }
		
\end{definition}
Denote by $CTDer(\mathcal{R})$ and $GCTDer(\mathcal{R}),$ the sets of conformal triple derivations and generalized conformal triple derivations of $\mathcal{R}$ respectively. Obviously, conformal triple derivations are all generalized conformal triple derivations, i.e. $CTDer(\mathcal{R})\subseteq GCTDer(\mathcal{R})$ by setting $\tau_x=\phi_{x}$ in (\ref{eq3.2}). However, the converse is not true in general.

By the above definition, we can obtain the following result immediately.
\begin{lemma}\label{lm3.1}
	For any $a, b, c \in \mathcal{R}$, let $\phi_x \in GCTDer(\mathcal{R})$ and $\tau_x$ be the conformal triple derivation related to $\phi_x$. Then we obtain that
	\begin{enumerate}
		\item $\phi_x([a_{\lambda}[b_{\mu}c]])= [\phi_x(a)_{\lambda+ x}[b_{\mu}c]]+ [a_{\lambda}[\tau_x(b)_{\mu+ x}c]]+ [a_{\lambda}[b_{\mu}\tau_x(c)]],$
		\item $(\phi_x- \tau_x)([[a_{\lambda}b]_{\lambda+ \mu}c])= [[(\phi_x- \tau_x)(a)_{\lambda+ x}b]_{\lambda+ \mu+ x}c]=[[a_{\lambda}(\phi_x- \tau_x)(b)]_{\lambda+ \mu+ x}c]\\
		=[[a_{\lambda}b]_{\lambda+ \mu}(\phi_x- \tau_x)(c)]$,
		\item $(\phi_x- \tau_x)([a_{\lambda}[b_{\mu}c]])= [(\phi_x- \tau_x)(a)_{\lambda+ x}[b_{\mu}c]]= [a_{\lambda}[(\phi_x- \tau_x)(b)_{\mu+ x}c]]= [a_{\lambda}[b_{\mu}(\phi_x- \tau_x)(c)]].$
	\end{enumerate}
\end{lemma}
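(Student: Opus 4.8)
The plan is to derive each of the three items directly from the defining relations of generalized conformal triple derivations, using the Jacobi identity of the Lie conformal algebra as the only nontrivial input. For item (1), I would start from the triple-bracket relation in Definition \ref{def3.1} for $\phi_x$ (equation \eqref{eq3.2}), then rewrite $[a_\lambda[b_\mu c]]$ via the Jacobi identity as $[[a_\lambda b]_{\lambda+\mu}c] + [b_\mu[a_\lambda c]]$. Applying $\phi_x$ to the left-hand side and expanding both resulting double-bracket terms by \eqref{eq3.2}, one gets a sum of six terms; then I would reassemble them, again using the Jacobi identity in reverse on the pairs that share the same ``derivation slot,'' to collapse the six terms into the three terms on the right-hand side of (1). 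Since $\phi_x$ acting on $a$ produces $[\phi_x(a)_{\lambda+x}[b_\mu c]]$ and the two $\tau_x$ terms combine similarly, the bookkeeping of spectral parameters ($\lambda, \mu, x$ and their sums) is the only thing requiring care.

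For item (2), the key observation is that subtracting \eqref{eq3.2} (the defining equation for $\phi_x$ with related triple derivation $\tau_x$) from \eqref{eq3.1} applied to $\tau_x$ itself (i.e.\ $\tau_x$ is a conformal triple derivation, so it satisfies \eqref{eq3.1} with itself in all three slots) causes the $b$-slot and $c$-slot terms to cancel, leaving only
\begin{align*}
(\phi_x-\tau_x)([[a_\lambda b]_{\lambda+\mu}c]) = [[(\phi_x-\tau_x)(a)_{\lambda+x}b]_{\lambda+\mu+x}c].
\end{align*}
To get the other two equalities I would exploit skew-symmetry: $[[a_\lambda b]_{\lambda+\mu}c]$ can be rewritten (via skew-symmetry on the inner and/or outer bracket, or equivalently via the Jacobi identity) so that $b$ or $c$ plays the role that $a$ played above, and then the same cancellation argument yields $[[a_\lambda(\phi_x-\tau_x)(b)]_{\lambda+\mu+x}c]$ and $[[a_\lambda b]_{\lambda+\mu}(\phi_x-\tau_x)(c)]$ respectively. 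Alternatively, and perhaps more cleanly, one subtracts \eqref{eq3.2} from \eqref{eq3.1} for $\tau_x$ but groups terms so that the $a$-slot and $c$-slot cancel (giving the $b$-slot identity), and so on; all three follow from the same subtraction, just read off different pairs of cancelling terms.

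Item (3) then follows from item (2) exactly as item (1) followed from \eqref{eq3.2}: apply the Jacobi identity to turn $[a_\lambda[b_\mu c]]$ into a difference of double brackets, apply the three-part identity of item (2) to each, and recombine. Because in item (2) the operator $\phi_x-\tau_x$ ``passes through'' the bracket hitting only one slot at a time, the recombination is immediate and produces the single-slot identities for the nested bracket $[a_\lambda[b_\mu c]]$. The main obstacle throughout is purely clerical: keeping the shifted spectral parameters consistent when expanding and recontracting brackets, since each application of a triple-derivation axiom shifts an argument by $x$ and each Jacobi rearrangement redistributes $\lambda$ and $\mu$; there is no conceptual difficulty, only the need to track indices carefully so that the $x$-shifts land on the correct bracket levels.
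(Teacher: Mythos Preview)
Your approach is essentially the same as the paper's: Jacobi to split, apply \eqref{eq3.2}, recombine for (1); subtract \eqref{eq3.1} for $\tau_x$ from \eqref{eq3.2} and then permute the roles of $a,b,c$ via skew-symmetry and Jacobi for (2); and (3) follows from (2). Two small corrections are worth noting. First, in (1) the second piece $[b_\mu[a_\lambda c]]$ is not in the form to which \eqref{eq3.2} applies directly; the paper rewrites it as $-[[a_\lambda c]_{-\partial-\mu}b]$ via skew-symmetry before invoking \eqref{eq3.2}, and you should do the same. Second, your proposed ``alternative'' for (2)---reading off different pairs of cancelling terms from the same subtraction---does not work: when you subtract \eqref{eq3.1} for $\tau_x$ from \eqref{eq3.2}, the $b$-slot and $c$-slot terms are literally identical on both sides and cancel unconditionally, so only the $a$-slot identity survives. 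The other two equalities genuinely require your primary strategy (apply the $a$-slot identity after a skew-symmetry swap for the $b$-slot, and after a Jacobi rearrangement for the $c$-slot), which is exactly what the paper does.
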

\begin{proof}
	(1) By Jacobi identity, we have that
	\begin{align*}
	\phi_x([a_{\lambda}[b_{\mu}c]])= &\phi_x([[a_\lambda b]_{\lambda+ \mu} c])+ \phi_x([ b_\mu [a_\lambda c]])\\
	=& \phi_x([[a_\lambda b]_{\lambda+ \mu} c])- \phi_x([[a_\lambda c]_{- \partial- \mu}b])\\
	=&[[\phi_x(a)_{\lambda+ x}b]_{\lambda+ \mu+ x}c]+ [[a_{\lambda}\tau_x(b)]_{\lambda+ \mu+ x}c]+ [[a_{\lambda}b]_{\lambda+ \mu}\tau_x(c)]\\
	&-[[\phi_x(a)_{\lambda+ x}c]_{- \partial- x- \mu+ x}b]- [[a_{\lambda}\tau_x(c)]_{- \partial- x- \mu+ x}b]- [[a_{\lambda}c]_{- \partial- x- \mu}\tau_x(b)]\\
	=&[[\phi_x(a)_{\lambda+x}b]_{\lambda+\mu+x}c]+[[a_{\lambda}\tau_x(b)]_{\lambda+ \mu+ x}c]+ [[a_{\lambda}b]_{\lambda+ \mu}\tau_x(c)]\\
	&+ [b_\mu[\phi_x(a)_{\lambda+ x}c]]+ [b_\mu[a_{\lambda}\tau_x(c)]]+ [\tau_x(b)_{\mu+ x}[a_{\lambda}c]]\\
	=& [\phi_x(a)_{\lambda+ x}[b_{\mu}c]]+ [a_{\lambda}[\tau_x(b)_{\mu+ x}c]] + [a_{\lambda}[b_{\mu}\tau_x(c)]].
	\end{align*}
	(2) Since $\tau_{x}$ is the conformal triple derivation related to  $\phi_x$, we deduce that
	\begin{align*}
	\phi_x([[a_{\lambda}b]_{\lambda+ \mu}c])&= [[\phi_x(a)_{\lambda+ x}b]_{\lambda+ \mu+ x}c]+ [[a_{\lambda}\tau_x(b)]_{\lambda+ \mu+ x}c]+ [[a_{\lambda}b]_{\lambda+ \mu}\tau_x(c)]\\
	&= [[\phi_x(a)_{\lambda+ x}b]_{\lambda+ \mu+ x}c]+ \tau_x([[a_{\lambda}b]_{\lambda+ \mu}c])- [[\tau_x(a)_{\lambda+ x}b]_{\lambda+ \mu+ x}c].
	\end{align*}
	That is,
	\begin{align}\label{aa}
	(\phi_x- \tau_x)([[a_{\lambda}b]_{\lambda+ \mu}c])= [[(\phi_x- \tau_x)(a)_{\lambda+ x}b]_{\lambda+ \mu+ x}c].
	\end{align}
	
	Similarly, by skew-symmetry and the equation (\ref{aa}), we can obtain that
	\begin{align*}
	(\phi_x- \tau_x)([[a_{\lambda}b]_{\lambda+ \mu}c])&=-(\phi_x- \tau_x)([[b_{-\lambda-\partial}a]_{\lambda+ \mu}c])\\
	&=-(\phi_x- \tau_x)([[b_{-\lambda-(-\lambda-\mu)}a]_{\lambda+\mu}c])\\
	&=-(\phi_x- \tau_x)([[b_{\mu}a]_{\lambda+\mu}c])\\
	&=-[[(\phi_x- \tau_x)(b)_{\mu+x}a]_{\lambda+\mu+x}c]\\	
	&=[[a_{-x-\mu-\partial}(\phi_x- \tau_x)(b)]_{\lambda+\mu+x}c]\\	
	&=[[a_{-x-\mu-(-\lambda-\mu-x)}(\phi_x- \tau_x)(b)]_{\lambda+\mu+x}c]\\	
	&= [[a_{\lambda}(\phi_x- \tau_x)(b)]_{\lambda+ \mu+x}c].
	\end{align*}
	By skew-symmetry and Jacobi identity, we deduce that
	\begin{align*}
	(\phi_x- \tau_x)([[a_{\lambda}b]_{\lambda+ \mu}c])&= (\phi_x- \tau_x)([a_\lambda [b_\mu c]])- (\phi_x- \tau_x)([ b_\mu [a_\lambda c]])\\
	&=- (\phi_x- \tau_x)([[b_\mu c]_{-\partial- \lambda}a])+ (\phi_x-\tau_x)([[a_\lambda c]_{- \partial- \mu}b])\\
	&=- [[b_\mu (\phi_x- \tau_x)(c)]_{- \partial- \lambda}a]+ [[a_\lambda (\phi_x- \tau_x)(c)]_{- \partial- \mu}b]\\
	&= [a_\lambda[b_\mu (\phi_x- \tau_x)(c)]]- [b_\mu[a_\lambda (\phi_x- \tau_x)(c)]]\\
	&= [[a_{\lambda}b]_{\lambda+ \mu}(\phi_x- \tau_x)(c)].
	\end{align*}
	Hence, we have
	\begin{align*}
	(\phi_x- \tau_x)([[a_{\lambda}b]_{\lambda+ \mu}c])&= [[(\phi_x- \tau_x)(a)_{\lambda+ x}b]_{\lambda+ \mu+ x}c]= [[a_{\lambda}(\phi_x- \tau_x)(b)]_{\lambda+ \mu+ x}c]\\
	&= [[a_{\lambda}b]_{\lambda+ \mu}(\phi_x- \tau_x)(c)].
	\end{align*}
	(3) It follows immediately from (2).
	
	This completes the proof.
\end{proof}

\begin{remark}\label{rm3.3}
	\em{
		Inspired by the proof of (2) in Lemma \ref{lm3.1}, we can obtain some interesting conclusions as follows. Let $\phi_x \in gc(\mathcal{R})$. For any $a,b,c \in \mathcal{R}$, we have\\
		(1) The following three equalities are equivalent:
		\begin{enumerate}[(i)]\item
			$\phi_x([[a_{\lambda}b]_{\lambda+ \mu}c])= [[\phi_x(a)_{\lambda+ x}b]_{\lambda+ \mu+ x}c].$
			\item $\phi_x([[a_{\lambda}b]_{\lambda+ \mu}c])= [[a_{\lambda}\phi_x(b)]_{\lambda+ \mu+ x}c].$
			\item $\phi_x([[a_{\lambda}b]_{\lambda+ \mu}c])= [[\phi_x(a)_{\lambda+ x}b]_{\lambda+ \mu+ x}c]= [[a_{\lambda}\phi_x(b)]_{\lambda+ \mu+ x}c]= [[a_{\lambda}b]_{\lambda+ \mu}\phi_x(c)].$ \end{enumerate}
		(2) The following three equalities are also equivalent:
		\begin{enumerate}[(i)]
			\item $[[\phi_x(a)_{\lambda+ x}b]_{\lambda+ \mu+ x}c]= [[a_{\lambda}b]_{\lambda+ \mu}\phi_x(c)].$
			\item $[[a_{\lambda}\phi_x(b)]_{\lambda+ \mu+ x}c]= [[a_{\lambda}b]_{\lambda+ \mu}\phi_x(c)].$
			\item $[[\phi_x(a)_{\lambda+ x}b]_{\lambda+ \mu+ x}c]= [[a_{\lambda}\phi_x(b)]_{\lambda+ \mu+ x}c]= [[a_{\lambda}b]_{\lambda+ \mu}\phi_x(c)].$
		\end{enumerate}}
	
\end{remark}

\begin{definition}\label{def3.3}
	\em{
		A conformal linear map $\phi_{x} \in gc(\mathcal{R})$ is called a \emph{conformal triple centroid} of $\mathcal{R}$ if it satisfies the following axiom:
		\begin{align}\label{eq3.3}
		\phi_x([[a_{\lambda}b]_{\lambda+ \mu}c])= [[\phi_x(a)_{\lambda+ x}b]_{\lambda+ \mu+ x}c],
		\end{align}
		for any $a, b, c \in \mathcal{R}$.
		
		A conformal linear map $\phi_{x} \in gc(\mathcal{R})$ is said to be a \emph{conformal triple quasicentroid} of $\mathcal{R}$ if it satisfies the following equality:
		\begin{align}\label{eq3.4}
		[[\phi_x(a)_{\lambda+ x}b]_{\lambda+ \mu+ x}c]= [[a_{\lambda}b]_{\lambda+ \mu}\phi_x(c)],
		\end{align}
		for any $a, b, c \in \mathcal{R}$.
		
		A conformal linear map $\phi_{x} \in gc(\mathcal{R})$ is said to be a \emph{central conformal triple derivation} of $\mathcal{R}$ if it satisfies the following equality:
		\begin{align}\label{eq3.5}
		\phi_x([[a_{\lambda}b]_{\lambda+ \mu}c])= [[\phi_x(a)_{\lambda+ x}b]_{\lambda+ \mu+ x}c]=0,
		\end{align}
		for any $a, b, c \in \mathcal{R}$.
		}
		
\end{definition}

Hereafter, we denote by $TC(\mathcal{R})$, $TQC(\mathcal{R})$ and $ZTDer(\mathcal{R})$ the sets of conformal triple centroids, quasicentroids and central conformal triple derivations respectively. Obviously, we can obtain the tower
$ZTDer(\mathcal{R})\subseteq TC(\mathcal{R})\subseteq TQC(\mathcal{R})$ follows from (1) in remark \ref{rm3.3}. As usual, the center of Lie conformal algebra $\mathcal{R}$ is denoted by $Z(\mathcal{R})= \{a\in \mathcal{R} | [a_{\lambda}b]= 0, \ \ \forall b\in \mathcal{R} \}$. For a subset $S$ of $\mathcal{R}$, denote by $C_{\mathcal{R}}(S)$ the centralizer of $S$ in $\mathcal{R}$. It is clear that, if $Z(\mathcal{R})= 0$, then $ZTDer(\mathcal{R})= 0$.

\begin{remark}\label{rm3.5}
	\em{
		Inspired by the result of (2) in Lemma \ref{lm3.1}, we deduce that if $\phi_x \in GCTDer(\mathcal{R})$ and $\tau_x$ is the conformal triple derivation related to $\phi_x ,$ then $\phi_x- \tau_x \in TC(\mathcal{R})$.}
		
\end{remark}

It is well known that $CDer(Vir)= CInn(Vir)$ (see \cite{DK}). Then we investigate the characterization of $ZTDer(Vir)$, $TC(Vir)$ and $TQC(Vir)$. $CTDer(Vir)$ and $GCTDer(Vir)$ are shown in Section $4$ (see Theorem \ref{tm4.8}).
\begin{proposition}\label{pro3.33}
	$TQC(Vir)= TC(Vir)= ZTDer(Vir)= 0$.
\end{proposition}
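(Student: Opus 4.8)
The plan is to show that any conformal triple quasicentroid $\phi_x$ of $Vir$ already satisfies the centroid condition, is then forced to vanish, and that the stronger classes sit below it; since $Z(Vir)=0$ gives $ZTDer(Vir)=0$ for free, and the tower $ZTDer(Vir)\subseteq TC(Vir)\subseteq TQC(Vir)$ is recorded in the excerpt, it suffices to prove $TQC(Vir)=0$. First I would use that $Vir=\mathbb{C}[\partial]L$ is free of rank one on $L$, so a conformal linear map $\phi_x$ is completely determined by $\phi_x(L)=f(\partial,x)L$ for some polynomial $f\in\mathbb{C}[\partial,x]$ (conformal linearity forces $\phi_x(\partial L)=(\partial+x)f(\partial,x)L$, etc.). The strategy is to plug $a=b=c=L$ into the quasicentroid identity \eqref{eq3.4} and extract a polynomial identity on $f$.

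The key computation is to evaluate both sides of
\begin{align*}
[[\phi_x(L)_{\lambda+x}L]_{\lambda+\mu+x}L]=[[L_{\lambda}L]_{\lambda+\mu}\phi_x(L)]
\end{align*}
using $[L_\lambda L]=(\partial+2\lambda)L$ and the sesquilinearity rules. On the left, $[\phi_x(L)_{\lambda+x}L]=[(f(\partial,x)L)_{\lambda+x}L]$; since $\phi_x(L)=f(\partial,x)L$, conformal sesquilinearity turns the $f(\partial,x)$ into $f(-\lambda-x,x)$ acting as a scalar, giving $f(-\lambda-x,x)(\partial+2(\lambda+x))L$, and then bracketing with $L$ again at $\lambda+\mu+x$ produces $f(-\lambda-x,x)\bigl(-(\lambda+\mu+x)\bigr)\cdots$ — I would carry out this two-step bracket carefully to get a polynomial $P(\partial,\lambda,\mu,x)\,L$. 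On the right, $[L_\lambda L]=(\partial+2\lambda)L$ becomes $(-(\lambda+\mu)+2\lambda)L=(\lambda-\mu)L$ after the outer bracketing shifts $\partial\mapsto -(\lambda+\mu)$ when we compute $[(\,(\partial+2\lambda)L)_{\lambda+\mu}\phi_x(L)]$; then bracketing with $\phi_x(L)=f(\partial,x)L$ and using $[L_{\lambda+\mu}\partial^k L]=(\partial+\lambda+\mu)^k[L_{\lambda+\mu}L]$ yields another polynomial $Q(\partial,\lambda,\mu,x)\,L$. Equating $P=Q$ as elements of $\mathbb{C}[\partial,\lambda,\mu,x]$ and comparing coefficients (e.g. setting $x=0$ recovers the known $TC$-type constraint, and the genuine $x$-dependence plus the independence of $\mu$ on one side forces extra vanishing) should force $f\equiv 0$, hence $\phi_x=0$.

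I would also note the shortcut that, by the tower and by Remark \ref{rm3.3}(2)(iii), a quasicentroid automatically satisfies $[[\phi_x(a)_{\lambda+x}b]_{\lambda+\mu+x}c]=[[a_{\lambda}\phi_x(b)]_{\lambda+\mu+x}c]=[[a_{\lambda}b]_{\lambda+\mu}\phi_x(c)]$, so in particular $[[\phi_x(a)_{\lambda+x}b]_{\lambda+\mu+x}c]$ is symmetric enough that one can also test with $a=b=c=L$ in the form $[[L_\lambda L]_{\lambda+\mu}\phi_x(L)]=[[L_\lambda\phi_x(L)]_{\lambda+\mu}L]$, which may be the cleanest single equation to exploit. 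The main obstacle I anticipate is purely bookkeeping: getting the $\lambda$-shifts right inside the nested brackets (each $\partial$ that sits to the left of an inner bracket must be replaced by the correct affine combination of $\lambda,\mu,x$ before one can read off coefficients), and making sure the comparison of coefficients in four variables is organized so that the conclusion $f=0$ is unambiguous rather than merely "$f$ lies in some ideal". Once $f=0$ is established, $TQC(Vir)=0$, and then $TC(Vir)=ZTDer(Vir)=0$ follow immediately from the containments, completing the proof.
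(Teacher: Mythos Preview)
Your approach is correct and essentially identical to the paper's: both reduce to $TQC(Vir)=0$ via the tower, write $\phi_x(L)=f(\partial,x)L$, substitute $a=b=c=L$ into \eqref{eq3.4}, and compare coefficients in the resulting polynomial identity to force $f=0$. The paper's explicit identity is
\[
f(-\lambda-x,x)(\lambda+x-\mu)(\partial+2\lambda+2\mu+2x)=f(\partial+\lambda+\mu,x)(\lambda-\mu)(\partial+2\lambda+2\mu),
\]
and the quickest kill is simply to compare $\partial$-degrees: the left side is linear in $\partial$ while the right side has $\partial$-degree $\deg_\partial f+1$, so $f$ must be independent of $\partial$, after which the $\partial$-coefficient gives $\lambda+x-\mu=\lambda-\mu$, hence $f=0$. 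Your anticipated ``bookkeeping'' is the only real content, and it goes through cleanly.
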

\begin{proof}
	For any $\phi_x \in TQC(Vir)$, assume that $\phi_x(L)= f(\partial, x)L$ for some $f(\partial, x) \in \mathbb{C}[\partial, x]$. By (\ref{eq3.4}), we obtain that
	\begin{align*}
	f(-\lambda- x, x)(\lambda+ x- \mu)(\partial+ 2\lambda+ 2\mu+ 2x)= f(\partial+ \lambda+ \mu, x)(\lambda- \mu)(\partial+ 2\lambda+ 2\mu).
	\end{align*}
	Obviously, we can deduce that $f(\partial, x)= 0$ by comparing the coefficient of $\partial$ in the above equation. Thus, we have $TQC(Vir)= 0$. According to the above discussion, we have the tower $ZTDer(Vir)\subseteq TC(Vir)\subseteq TQC(Vir)$. Hence, we deduce that $TQC(Vir)= TC(Vir)= ZTDer(Vir)= 0$.
\end{proof}

From now on, we investigate the connections of these conformal triple derivations.
\begin{proposition}\label{pro3.4}
	$CTDer(\mathcal{R}), GCTDer(\mathcal{R})$ and $TC(\mathcal{R})$ are Lie conformal subalgebras of $gc(\mathcal{R})$.
\end{proposition}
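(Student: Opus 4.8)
The plan is, for each of the three sets $CTDer(\mathcal{R})$, $GCTDer(\mathcal{R})$, $TC(\mathcal{R})$, to verify the two defining requirements of a Lie conformal subalgebra of $gc(\mathcal{R})$: that it is a $\mathbb{C}[\partial]$-submodule, and that it is closed under the $\lambda$-bracket of $gc(\mathcal{R})$. The submodule part is routine. Each of the identities (\ref{eq3.1}), (\ref{eq3.2}), (\ref{eq3.3}) is linear in the conformal map, so the three sets are $\mathbb{C}$-subspaces, and it only remains to check stability under $\partial$. For this I would use $(\partial\phi)_x(a)=-x\phi_x(a)$ together with the observation that, writing $\phi_x(a)=\sum_j x^j c_j$ with $c_j\in\mathcal{R}$, the scalar $-x$ factors out of any iterated $\lambda$-bracket in which $(\partial\phi)_x(a)$, $(\partial\phi)_x(b)$ or $(\partial\phi)_x(c)$ appears as an argument; for instance $[[(\partial\phi)_x(a)_{\lambda+x}b]_{\lambda+\mu+x}c]=-x[[\phi_x(a)_{\lambda+x}b]_{\lambda+\mu+x}c]$. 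Hence applying $\partial$ multiplies both sides of (\ref{eq3.1}), resp.\ (\ref{eq3.3}), by $-x$, so $\partial\phi$ stays in $CTDer(\mathcal{R})$, resp.\ $TC(\mathcal{R})$; and if $\phi\in GCTDer(\mathcal{R})$ with related derivation $\tau$, the same computation shows $\partial\phi$ satisfies (\ref{eq3.2}) with related derivation $\partial\tau$, which lies in $CTDer(\mathcal{R})$ by the case just treated.

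The substantial step is closedness under the bracket, and I would extract it all from one assertion: if $\phi\in GCTDer(\mathcal{R})$ has related derivation $\tau\in CTDer(\mathcal{R})$ and $\psi\in GCTDer(\mathcal{R})$ has related derivation $\sigma\in CTDer(\mathcal{R})$, then $[\phi_x\psi]\in GCTDer(\mathcal{R})$ with related derivation $[\tau_x\sigma]$, and $[\tau_x\sigma]\in CTDer(\mathcal{R})$. To prove this I would start from
\[
[\phi_x\psi]_y\big([[a_\lambda b]_{\lambda+\mu}c]\big)=\phi_x\big(\psi_{y-x}([[a_\lambda b]_{\lambda+\mu}c])\big)-\psi_{y-x}\big(\phi_x([[a_\lambda b]_{\lambda+\mu}c])\big),
\]
apply (\ref{eq3.2}) (for $\psi$, resp.\ for $\phi$) to the inner map in each of the two terms, and then apply (\ref{eq3.2}) again, term by term, to the outer map. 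This produces two sums of nine iterated brackets. Six of the nine pairs are ``mixed'': $\phi$ (or $\tau$) and $\psi$ (or $\sigma$) act on two distinct arguments among $a,b,c$. Once the conformal shifts are collected --- e.g.\ $\psi_{y-x}$ acting on an argument already carried at conformal parameter $\lambda+x$ produces $\lambda+y=(\lambda+x)+(y-x)$, and on one at $\lambda+\mu+x$ produces $\lambda+\mu+y$ --- the two members of each mixed pair are identical and cancel. The three remaining differences collapse to commutators: the $a$--$a$ pair yields $[[[\phi_x\psi]_y(a)_{\lambda+y}b]_{\lambda+\mu+y}c]$, the $b$--$b$ pair yields $[[a_\lambda[\tau_x\sigma]_y(b)]_{\lambda+\mu+y}c]$, and the $c$--$c$ pair yields $[[a_\lambda b]_{\lambda+\mu}[\tau_x\sigma]_y(c)]$, where $[\tau_x\sigma]_y(u)=\tau_x(\sigma_{y-x}u)-\sigma_{y-x}(\tau_x u)$. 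Summing, $[\phi_x\psi]$ satisfies (\ref{eq3.2}) with $[\tau_x\sigma]$ as related map; note that no use of skew-symmetry or the Jacobi identity is needed, since (\ref{eq3.2}) is already stated in its fully symmetric three-slot form.

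From this assertion the proposition follows at once. Specializing it to $\phi=\tau$ and $\psi=\sigma$ --- every conformal triple derivation is a related derivation of itself --- turns (\ref{eq3.2}) into (\ref{eq3.1}) throughout the computation and shows that the commutator of two conformal triple derivations is again a conformal triple derivation; this proves $CTDer(\mathcal{R})$ is closed under the bracket and, in particular, justifies the clause $[\tau_x\sigma]\in CTDer(\mathcal{R})$ in the assertion. The assertion itself gives closedness of $GCTDer(\mathcal{R})$. For $TC(\mathcal{R})$, I would observe that (\ref{eq3.3}) is precisely (\ref{eq3.2}) with $\tau=0$, so each element of $TC(\mathcal{R})$ is a generalized conformal triple derivation with related derivation $0\in CTDer(\mathcal{R})$; applying the assertion with $\tau=\sigma=0$ gives, for $\phi,\psi\in TC(\mathcal{R})$, that $[\phi_x\psi]$ has related derivation $[0_x0]=0$, i.e.\ $[\phi_x\psi]$ satisfies (\ref{eq3.3}) and lies in $TC(\mathcal{R})$. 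Together with the submodule part this yields the three subalgebra statements.

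I expect the only genuine difficulty to be the bookkeeping in the middle step: correctly listing all eighteen iterated brackets and tracking the conformal-parameter shifts (say $\lambda+\mu+x$ versus $\lambda+\mu+y-x$) so that each of the six mixed pairs is seen to cancel exactly and the three diagonal pairs combine into the stated identity. As a sanity check one can also handle $TC(\mathcal{R})$ completely directly: for $\phi,\psi\in TC(\mathcal{R})$ apply (\ref{eq3.3}) to $\psi$ and then to $\phi$ in $\phi_x(\psi_{y-x}([[a_\lambda b]_{\lambda+\mu}c]))$, do the same with the roles reversed, and subtract, which reproduces $[\phi_x\psi]\in TC(\mathcal{R})$ in a couple of lines using only (\ref{eq3.3}) twice.
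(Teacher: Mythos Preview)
Your argument is correct and, at its core, is the same nine-term expansion the paper uses: apply the defining identity to the inner map, then to the outer, do the same with the roles swapped, subtract, watch six mixed pairs cancel and three diagonal pairs assemble into the identity for $[\phi_x\psi]_y$. The paper carries this out only for $CTDer(\mathcal{R})$ and then declares the other two cases ``similar''. Your organization is a bit tighter: you prove one master identity for $GCTDer(\mathcal{R})$ with independent related derivations $\tau,\sigma$, and then read off all three cases by specialization ($\tau=\phi$, $\sigma=\psi$ recovers $CTDer$; $\tau=\sigma=0$ recovers $TC$, since (\ref{eq3.3}) is (\ref{eq3.2}) with $\tau=0$). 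This buys you the $GCTDer$ and $TC$ closures without a separate computation, and also makes transparent that the related derivation of $[\phi_x\psi]$ is $[\tau_x\sigma]$, a fact the paper does not state. You also make the $\mathbb{C}[\partial]$-submodule check explicit via $(\partial\phi)_x=-x\phi_x$, which the paper omits entirely.
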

\begin{proof}
	It is not difficult to see that the proofs of all cases are similar. So we only need to prove that, for any $\phi_x, \psi_x \in CTDer(\mathcal{R})$, $[\phi_x\psi]_y \in CTDer(\mathcal{R})[x]$.
	
	Since $\phi_x, \psi_x \in CTDer(\mathcal{R})$, we can obtain that
	\begin{align*}
	\phi_x([[a_{\lambda}b]_{\lambda+ \mu}c])= [[\phi_x(a)_{\lambda+ x}b]_{\lambda+ \mu+ x}c]+ [[a_{\lambda}\phi_x(b)]_{\lambda+  \mu+ x}c]+ [[a_{\lambda}b]_{\lambda+ \mu}\phi_x(c)],\\
	\psi_x([[a_{\lambda}b]_{\lambda+ \mu}c])= [[\psi_x(a)_{\lambda+ x}b]_{\lambda+ \mu+ x}c]+ [[a_{\lambda}\psi_x(b)]_{\lambda+ \mu+ x}c]+ [[a_{\lambda}b]_{\lambda+ \mu}\psi_x(c)],
	\end{align*}
	for any $a, b, c \in \mathcal{R}$. Furthermore, we can get
	\begin{align*}
	\phi_x&\psi_{y- x}([[a_{\lambda}b]_{\lambda+ \mu}c])\\
	= &\phi_x([[\psi_{y- x}(a)_{\lambda+ y- x}b]_{\lambda+ \mu+ y- x}c]+ [[a_{\lambda}\psi_{y- x}(b)]_{\lambda+ \mu+ y- x}c]+ [[a_{\lambda}b]_{\lambda+ \mu}\psi_{y- x}(c)])\\
	= &[[\phi_x\psi_{y- x}(a)_{\lambda+ y}b]_{\lambda+ \mu+ y}c]+ [[\psi_{y- x}(a)_{\lambda+ y- x}\phi_x(b)]_{\lambda+ \mu+ y}c]+ [[\psi_{y- x}(a)_{\lambda+ y- x}b]_{\lambda+ \mu+ y- x}\phi_x(c)]\\
	&+[[\phi_x(a)_{\lambda+ x}\psi_{y- x}(b)]_{\lambda+ \mu+ y}c]+ [[a_{\lambda}\phi_x\psi_{y- x}(b)]_{\lambda+ \mu+ y}c]+ [[a_{\lambda}\psi_{y- x}(b)]_{\lambda+ \mu+ y- x}\phi_x(c)]\\
	&+[[\phi_x(a)_{\lambda+ x}b]_{\lambda+ \mu+ x}\psi_{y- x}(c)]+ [[a_{\lambda}\phi_x(b)]_{\lambda+ \mu+ x}\psi_{y- x}(c)]+ [[a_{\lambda}b]_{\lambda+ \mu}\phi_x\psi_{y- x}(c)].
	\end{align*}
	Similarly, we have
	\begin{align*}
	\psi_{y- x}&\phi_x([[a_{\lambda}b]_{\lambda+ \mu}c])\\
	= &[[\psi_{y- x}\phi_x(a)_{\lambda+ y}b]_{\lambda+ \mu+ y}c]+ [[\phi_x(a)_{\lambda+ x}\psi_{y- x}(b)]_{\lambda+ \mu+ y}c]+ [[\phi_x(a)_{\lambda+ x}b]_{\lambda+ \mu+ x}\psi_{y- x}(c)]\\
	&+ [[\psi_{y- x}(a)_{\lambda+ y- x}\phi_x(b)]_{\lambda+ \mu+ y}c]+ [[a_{\lambda}\psi_{y- x}\phi_x(b)]_{\lambda+ \mu+ y}c]+ [[a_{\lambda}\phi_x(b)]_{\lambda+ \mu+ x}\psi_{y- x}(c)]\\
	&+ [[\psi_{y- x}(a)_{\lambda+ y- x}b]_{\lambda+ \mu+ y- x}\phi_x(c)]+ [[a_{\lambda}\psi_{y- x}(b)]_{\lambda+ \mu+ y- x}\phi_x(c)]+ [[a_{\lambda}b]_{\lambda+ \mu}\psi_{y- x}\phi_x(c)].
	\end{align*}
	Thus, we can obtain that
	\begin{align*}
	[\phi_x\psi]_y([[a_{\lambda}b]_{\lambda+ \mu}c])= [[[\phi_x\psi]_y(a)_{\lambda+ y}b]_{\lambda+ \mu+ y}c]+ [[a_{\lambda}[\phi_x\psi]_y(b)]_{\lambda+ \mu+ y}c]+ [[a_{\lambda}b]_{\lambda+ \mu}[\phi_x\psi]_y(c)].
	\end{align*}
	Hence, $[\phi_x\psi]_y \in CTDer(\mathcal{R})[x]$, i.e. $CTDer(\mathcal{R})$ is a Lie conformal subalgebra of $gc(\mathcal{R})$.
	
	This completes the proof.
\end{proof}

\begin{proposition}\label{pro3.5}
	$ZTDer(\mathcal{R})$ is a Lie conformal ideal of $CTDer(\mathcal{R})$ and $GCTDer(\mathcal{R})$.
\end{proposition}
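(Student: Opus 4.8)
The plan is to verify the two defining properties of a Lie conformal ideal: first, that $ZTDer(\mathcal{R})$ is a $\mathbb{C}[\partial]$-submodule closed under the $\lambda$-bracket with itself (which is immediate since it is a subspace of the Lie conformal subalgebras established in Proposition \ref{pro3.4}, and the $\partial$-action sends $\phi_x$ to $-x\phi_x$, preserving the defining equations of Definition \ref{def3.3}); and second, the essential point, that $[\phi_x \psi]_y \in ZTDer(\mathcal{R})[x]$ whenever $\phi_x \in ZTDer(\mathcal{R})$ and $\psi_x$ is either a conformal triple derivation or a generalized conformal triple derivation. So the real content is showing $[ZTDer(\mathcal{R})_x\, CTDer(\mathcal{R})] \subseteq ZTDer(\mathcal{R})[x]$ and the analogous inclusion for $GCTDer(\mathcal{R})$.

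First I would take $\phi_x \in ZTDer(\mathcal{R})$ and $\psi_x \in CTDer(\mathcal{R})$ and compute $[\phi_x\psi]_y([[a_\lambda b]_{\lambda+\mu}c])$ directly from the bracket formula in Definition \ref{def2.7}, namely $[\phi_x\psi]_y(d) = \phi_x(\psi_{y-x}(d)) - \psi_{y-x}(\phi_x(d))$ with $d = [[a_\lambda b]_{\lambda+\mu}c]$. For the first term I expand $\psi_{y-x}([[a_\lambda b]_{\lambda+\mu}c])$ using the triple derivation property, then apply $\phi_x$; since $\phi_x$ is a central conformal triple derivation, $\phi_x$ annihilates every triple bracket $[[\cdot_\lambda\cdot]_{\lambda+\mu}\cdot]$, so $\phi_x$ kills the image of $\psi_{y-x}$ applied to triple brackets — I should also note that by Remark \ref{rm3.3}(1) the condition $\phi_x([[a_\lambda b]_{\lambda+\mu}c]) = [[\phi_x(a)_{\lambda+x}b]_{\lambda+\mu+x}c] = 0$ actually forces $[[\phi_x(a)_{\lambda+x}b]_{\lambda+\mu+x}c] = [[a_\lambda\phi_x(b)]_{\lambda+\mu+x}c] = [[a_\lambda b]_{\lambda+\mu}\phi_x(c)] = 0$ as well, so after applying $\psi_{y-x}$'s expansion the $\phi_x$ of the three resulting terms each vanish. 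For the second term, $\phi_x([[a_\lambda b]_{\lambda+\mu}c]) = 0$ since $\phi_x \in ZTDer(\mathcal{R})$, so $\psi_{y-x}(\phi_x(\cdots)) = 0$. Hence $[\phi_x\psi]_y([[a_\lambda b]_{\lambda+\mu}c]) = 0$. It remains to show the other half of the central triple derivation condition, $[[[\phi_x\psi]_y(a)_{\lambda+y}b]_{\lambda+\mu+y}c] = 0$; for this I would use $[[\phi_x(a)_{\lambda+x}b]_{\lambda+\mu+x}c] = 0$ for all $a,b,c$ (so in particular with $a$ replaced by $\psi_{y-x}(a)$, and $b,c$ arbitrary), together with the identity $[[\psi_x(a')_{\lambda+x}b]_{\lambda+\mu+x}c] = [[a'_\lambda\psi_x(b)]_{\lambda+\mu+x}c]$-type relations coming from $\psi \in CTDer$ and the vanishing of the $\phi_x$-terms, to conclude that $[[[\phi_x\psi]_y(a)_{\lambda+y}b]_{\lambda+\mu+y}c]$ expands into a sum of terms each of which contains a factor of the form $[[\phi_x(\text{something})_{\cdots}\cdot]_{\cdots}\cdot]$ or $[[\cdot_{\cdots}\phi_x(\text{something})]_{\cdots}\cdot]$, all of which are zero.

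For the $GCTDer(\mathcal{R})$ case the argument is parallel: if $\psi_x \in GCTDer(\mathcal{R})$ with related conformal triple derivation $\tau_x$, I use Remark \ref{rm3.5} to write $\psi_x = (\psi_x - \tau_x) + \tau_x$ with $\psi_x - \tau_x \in TC(\mathcal{R})$ and $\tau_x \in CTDer(\mathcal{R})$, and then combine the $CTDer$ computation above with the observation that $[\phi_x(\psi-\tau)]_y$ lands in $ZTDer(\mathcal{R})[x]$ because $\phi_x$ annihilates all triple and centroid brackets. Alternatively one can just verify Definition \ref{eq3.2} and \ref{eq3.5} directly for $[\phi_x\psi]_y$ with related derivation $[\phi_x\tau]_y$, which is cleaner given that we already know $CTDer(\mathcal{R})$ is closed under the bracket by Proposition \ref{pro3.4}.

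The main obstacle I anticipate is purely bookkeeping: the bracket $[\phi_x\psi]_y$ produces, after expanding both $\phi_x\psi_{y-x}$ and $\psi_{y-x}\phi_x$ on a triple bracket, a large collection of terms with shifted spectral parameters (as in the proof of Proposition \ref{pro3.4}), and one must track carefully which terms are killed by the centrality of $\phi_x$ versus which cancel in pairs between the two orderings. The key simplification that makes this manageable is Remark \ref{rm3.3}(1): once $\phi_x([[a_\lambda b]_{\lambda+\mu}c]) = 0$ and $[[\phi_x(a)_{\lambda+x}b]_{\lambda+\mu+x}c] = 0$, \emph{all three} positions of $\phi_x$ in a triple bracket give zero, so essentially every term involving $\phi_x$ inside a double bracket vanishes outright, leaving nothing to cancel. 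I would therefore state the proof by first invoking Remark \ref{rm3.3}(1) to record that a central conformal triple derivation kills every double bracket regardless of the slot, and then the computation collapses quickly.
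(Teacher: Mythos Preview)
Your proposal is correct and follows essentially the same route as the paper: verify directly that both defining conditions of $ZTDer$ hold for $[\phi_x\psi]_y$ by expanding the bracket and observing that every surviving term contains $\phi_x$ in some slot of a double $\lambda$-bracket, hence vanishes by the central property (via Remark~\ref{rm3.3}(1)). The paper's proof does exactly this, handling $[[[\phi_x\psi]_y(a)_{\lambda+y}b]_{\lambda+\mu+y}c]$ first and then remarking that the other condition is similar; for the $GCTDer$ case it simply says the argument is analogous, whereas you sketch the decomposition $\psi_x=(\psi_x-\tau_x)+\tau_x$ --- either works.

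One small correction: the ``identity $[[\psi_x(a')_{\lambda+x}b]_{\lambda+\mu+x}c]=[[a'_\lambda\psi_x(b)]_{\lambda+\mu+x}c]$'' you invoke does \emph{not} hold for $\psi\in CTDer(\mathcal{R})$ --- that is the triple-centroid condition, not the triple-derivation one. What you actually need (and what the paper uses) is the triple-derivation rule for $\psi_{y-x}$ applied with first argument $\phi_x(a)$, rearranged to express $[[\psi_{y-x}\phi_x(a)_{\lambda+y}b]_{\lambda+\mu+y}c]$ as $\psi_{y-x}([[\phi_x(a)_{\lambda+x}b]_{\lambda+\mu+x}c])$ minus two further terms, each still containing $\phi_x(a)$ in a slot; all three vanish by centrality. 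Your conclusion is right, only the name of the intermediate identity is off.
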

\begin{proof}
	Obviously, we have the tower $ZTDer(\mathcal{R})\subseteq CTDer(\mathcal{R})\subseteq GCTDer(\mathcal{R})$. We only need to prove that, for any $\phi_x\in ZTDer(\mathcal{R})$ and $\psi_x\in CTDer(\mathcal{R})$, then $[\phi_x\psi]_y \in ZTDer(\mathcal{R})[x]$. The other case can be proved similarly.
	
	For any $a, b, c \in \mathcal{R}$, we have
	\begin{align*}
	[[[\phi_x\psi]_y(a)_{\lambda+ y}b]_{\lambda+ \mu+ y}c]= &[[\phi_x\psi_{y- x}(a)_{\lambda+ y}b]_{\lambda+ \mu+ y}c]- [[\psi_{y- x}\phi_x(a)_{\lambda+ y}b]_{\lambda+ \mu+ y}c]\\
	= &0-(\psi_{y- x}([[\phi_x(a)_{\lambda+ x}b]_{\lambda+ \mu+ x}c])- [[\phi_x(a)_{\lambda+ x}\psi_{y- x}(b)]_{\lambda+ \mu+ y}c]\\
	&- [[\phi_x(a)_{\lambda+ x}b]_{\lambda+ \mu+ x}\psi_{y- x}(c)])\\
	= &0.
	\end{align*}
	Similarly, we can obtain that
	\begin{align*}
	[\phi_x\psi]_y([[a_{\lambda}b]_{\lambda+ \mu}c])= 0.
	\end{align*}
	Hence, we deduce that $[\phi_x\psi]_y \in ZTDer(\mathcal{R})[x]$.
	
	This completes the proof.
\end{proof}

\begin{remark}
	\em{
		It is well known that $gc(\mathcal{R})$ is a simple Lie conformal algebra. Thus, if $GCTDer(\mathcal{R})= gc(\mathcal{R})$, by Proposition \ref{pro3.5}, $ZTDer(\mathcal{R})$ is equal to either 0 or $gc(\mathcal{R})$. }

\end{remark}

\begin{proposition}\label{pro3.7}
	If $Z(\mathcal{R})= 0$, then $TC(\mathcal{R})$ and $TQC(\mathcal{R})$ are commutative Lie conformal algebras. In particular, if $Z(\mathcal{R})= 0$, $TC(\mathcal{R})$ centralizes $TQC(\mathcal{R})$.
\end{proposition}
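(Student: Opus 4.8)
The plan is to reduce the whole statement to the single claim that $[\phi_x\psi]_y=0$ for all $\phi_x,\psi_x\in TQC(\mathcal{R})$. Granting this, $TQC(\mathcal{R})$ is a commutative Lie conformal algebra (it is a $\mathbb{C}[\partial]$-submodule of $gc(\mathcal{R})$ by a routine check on (\ref{eq3.4}), and a vanishing bracket makes it closed and abelian); and since $TC(\mathcal{R})\subseteq TQC(\mathcal{R})$ by Remark \ref{rm3.3}(1), the same vanishing shows $TC(\mathcal{R})$ is commutative and that $TC(\mathcal{R})$ centralizes $TQC(\mathcal{R})$. Throughout I will use the characterization from Remark \ref{rm3.3}(2): membership of $\phi_x$ in $TQC(\mathcal{R})$ is equivalent to the fact that $\phi_x$ can be pushed through any of the three slots of the iterated bracket, i.e.
\begin{align*}
[[\phi_x(a)_{\lambda+x}b]_{\lambda+\mu+x}c]=[[a_\lambda\phi_x(b)]_{\lambda+\mu+x}c]=[[a_\lambda b]_{\lambda+\mu}\phi_x(c)].
\end{align*}

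The core is a two-route computation. Fix $\phi_x,\psi_x\in TQC(\mathcal{R})$, recall $[\phi_x\psi]_y(a)=\phi_x(\psi_{y-x}(a))-\psi_{y-x}(\phi_x(a))$, note that $\phi_x\psi_{y-x}$ and $\psi_{y-x}\phi_x$ are conformal in $y$, and set $E:=[[\phi_x\psi_{y-x}(a)_{\lambda+y}b]_{\lambda+\mu+y}c]$. Using the three identities above, I would evaluate $E$ in two ways. First, push $\phi_x$ out of slot one into slot three and then push $\psi_{y-x}$ out of slot one into slot three; tracking the spectral parameters gives $E=[[a_\lambda b]_{\lambda+\mu}\psi_{y-x}\phi_x(c)]$. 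Second, push $\phi_x$ into slot two, then push $\psi_{y-x}$ into slot three, and finally push $\phi_x$ from slot two into slot three; this gives $E=[[a_\lambda b]_{\lambda+\mu}\phi_x\psi_{y-x}(c)]$. Equating the two outputs yields
\begin{align*}
[[a_\lambda b]_{\lambda+\mu}([\phi_x\psi]_y(c))]=0\qquad\text{for all }a,b,c\in\mathcal{R}.
\end{align*}
Running the first route also for $\psi_{y-x}\phi_x$ in place of $\phi_x\psi_{y-x}$ and subtracting gives $[[([\phi_x\psi]_y(a))_{\lambda+y}b]_{\lambda+\mu+y}c]=-[[a_\lambda b]_{\lambda+\mu}([\phi_x\psi]_y(c))]$, hence
\begin{align*}
[[([\phi_x\psi]_y(a))_{\lambda+y}b]_{\lambda+\mu+y}c]=0\qquad\text{for all }a,b,c\in\mathcal{R}.
\end{align*}

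It remains to deduce $[\phi_x\psi]_y=0$ from $Z(\mathcal{R})=0$, which I would do by peeling off one bracket at a time. Fix $a$ and put $u:=[\phi_x\psi]_y(a)$. The last identity reads $[[u_\nu b]_\rho c]=0$ for all $b,c\in\mathcal{R}$ and all $\nu,\rho$, because $\nu=\lambda+y$ and $\rho=\lambda+\mu+y$ vary freely. For fixed $b$, comparing coefficients of the powers of $\nu$ shows that every coefficient of $[u_\nu b]\in\mathbb{C}[\nu]\otimes\mathcal{R}$ centralizes $\mathcal{R}$, hence lies in $Z(\mathcal{R})=0$; thus $[u_\nu b]=0$ for all $b$, whence $u\in Z(\mathcal{R})=0$. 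Therefore $[\phi_x\psi]_y(a)=0$ for all $a$, i.e. $[\phi_x\psi]_y=0$, and the proposition follows as explained in the first paragraph. The only genuinely delicate point is the spectral-parameter bookkeeping in the two evaluations of $E$: one must verify that pushing $\phi_x$ and $\psi_{y-x}$ through the iterated bracket along the two different routes lands both times on exactly the same outer labels $[[a_\lambda b]_{\lambda+\mu}\,\cdot\,]$, so that equating the outputs is legitimate; once Remark \ref{rm3.3} is available, everything else is formal.
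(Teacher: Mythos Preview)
Your argument is correct, and the spectral bookkeeping in the two routes for $E$ checks out. In particular, Route~2 gives
\[
E=[[\psi_{y-x}(a)_{\lambda+y-x}\phi_x(b)]_{\lambda+\mu+y}c]
=[[a_\lambda\phi_x(b)]_{\lambda+\mu+x}\psi_{y-x}(c)]
=[[a_\lambda b]_{\lambda+\mu}\phi_x\psi_{y-x}(c)],
\]
matching Route~1's outer labels, so the comparison is legitimate. The peeling argument from $Z(\mathcal{R})=0$ is fine.

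Your route differs from the paper's. The paper argues only the $TC(\mathcal{R})$ case explicitly and uses the defining $TC$-identity $\phi_x([[a_\lambda b]_{\lambda+\mu}c])=[[\phi_x(a)_{\lambda+x}b]_{\lambda+\mu+x}c]$ to \emph{pull $\phi_x$ out} of the triple bracket and later push it back in, reaching $[[\phi_x\psi_{y-x}(a)_{\lambda+y}b]_{\lambda+\mu+y}c]=[[\psi_{y-x}\phi_x(a)_{\lambda+y}b]_{\lambda+\mu+y}c]$ directly in slot~1; it then asserts the remaining cases are similar. Your argument instead uses only the slot-swapping identities of Remark~\ref{rm3.3}(2), which are available for all of $TQC(\mathcal{R})$, and compares two routes in slot~3. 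The payoff is that your single computation already covers the most general case $\phi_x,\psi_x\in TQC(\mathcal{R})$, so the commutativity of $TQC(\mathcal{R})$, the commutativity of $TC(\mathcal{R})$, and the centralizing statement all follow at once via $TC(\mathcal{R})\subseteq TQC(\mathcal{R})$. By contrast, the paper's pull-out step is not available for genuine $TQC$-elements, so its ``similarly'' for the $TQC$ case would in fact require something like what you did.
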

\begin{proof}
	Clearly, we only need to prove the first case, the other case can be proved similarly. For any $a,b,c \in \mathcal{R}$, $\phi_x,\psi_x\in TC(\mathcal{R})$, we can deduce that
	\begin{align*}
	[[\phi_x\psi_{y- x}(a)_{\lambda+ y}b]_{\lambda+ \mu+ y}c]&= \phi_x([[\psi_{y- x}(a)_{\lambda+ y- x}b]_{\lambda+ \mu+ y- x}c])\\
	&= \phi_x([[a_{\lambda}\psi_{y- x}(b)]_{\lambda+ \mu+ y- x}c])\\
	&= [[\phi_x(a)_{\lambda+ x}\psi_{y- x}(b)]_{\lambda+ \mu+ y}c]\\
	&= [[\psi_{y- x}\phi_x(a)_{\lambda+ y}b]_{\lambda+ \mu+ y}c].
	\end{align*}
	Hence, we have
	\begin{align*}
	[[[\phi_x\psi]_y(a)_{\lambda+ y}b]_{\lambda+ \mu+ y}c]= 0.
	\end{align*}
	Due to the arbitrary of $b, c$ and $Z(\mathcal{R})= 0$, we can deduce that
	\begin{align*}
	[\phi_x\psi]_y(a)= 0.
	\end{align*}
	Thus, we have $[\phi_x\psi]_y= 0$.
	
	This completes the proof.
\end{proof}

\begin{proposition}\label{pro3.8}
	Let $\mathcal{R}$ be a Lie conformal algebra. Then we have the following results:
	\begin{enumerate}
		\item $[CTDer(\mathcal{R})_\lambda TC(\mathcal{R})]_\mu \subseteq TC(\mathcal{R})[\lambda]$.
		\item $[GCTDer(\mathcal{R})_\lambda TC(\mathcal{R})]_\mu \subseteq TC(\mathcal{R})[\lambda]$.
		\item $[CTDer(\mathcal{R})_\lambda TQC(\mathcal{R})]_\mu \subseteq TQC(\mathcal{R})[\lambda]$.
	\end{enumerate}
\end{proposition}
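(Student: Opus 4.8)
The plan is to check, in each of the three cases, that the conformal linear map $[\phi_x\psi]_y$ --- computed through the general bracket $[\phi_x\psi]_y(a)=\phi_x\big(\psi_{y-x}(a)\big)-\psi_{y-x}\big(\phi_x(a)\big)$ --- satisfies the defining identity of the target class at the shifted spectral parameter $y$: identity (\ref{eq3.3}) for the two $TC$-cases, identity (\ref{eq3.4}) for the $TQC$-case. Membership in $gc(\mathcal{R})[x]$ is automatic since $gc(\mathcal{R})$ is a Lie conformal algebra. Throughout I would freely use the equivalent reformulations of Remark \ref{rm3.3}: a conformal triple centroid may be moved onto \emph{any} of the three slots of a double bracket, and the quasicentroid relation comes in the three equivalent forms listed there.

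For (1), take $\phi_x\in CTDer(\mathcal{R})$ and $\psi_x\in TC(\mathcal{R})$ and expand $[\phi_x\psi]_y([[a_{\lambda}b]_{\lambda+\mu}c])=\phi_x\big(\psi_{y-x}([[a_{\lambda}b]_{\lambda+\mu}c])\big)-\psi_{y-x}\big(\phi_x([[a_{\lambda}b]_{\lambda+\mu}c])\big)$. In the first summand I would first apply (\ref{eq3.3}) for $\psi$ (at parameter $y-x$) to place $\psi_{y-x}$ on the first slot, then apply (\ref{eq3.1}) for $\phi_x$ to the resulting double bracket, obtaining three terms with $\phi_x$ sitting on slots $a$, $b$, $c$ respectively. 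In the second summand I would first apply (\ref{eq3.1}) for $\phi_x$ to get three double brackets, then apply (\ref{eq3.3}) for $\psi$ to each of them, in every case pushing $\psi_{y-x}$ onto slot $a$ (legitimate by Remark \ref{rm3.3}(1)). Subtracting, the term with $\phi_x$ on slot $b$ and the term with $\phi_x$ on slot $c$ cancel against their counterparts, and what survives is $[[(\phi_x\psi_{y-x}-\psi_{y-x}\phi_x)(a)_{\lambda+y}b]_{\lambda+\mu+y}c]=[[[\phi_x\psi]_y(a)_{\lambda+y}b]_{\lambda+\mu+y}c]$, which is precisely (\ref{eq3.3}) for $[\phi_x\psi]_y$ at parameter $y$. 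For (2), rather than repeat the computation I would reduce to (1): by Remark \ref{rm3.5} together with $CTDer(\mathcal{R})\subseteq GCTDer(\mathcal{R})$, any $\phi_x\in GCTDer(\mathcal{R})$ with associated $\tau_x$ decomposes as $\phi_x=\tau_x+(\phi_x-\tau_x)$ with $\tau_x\in CTDer(\mathcal{R})$ and $\phi_x-\tau_x\in TC(\mathcal{R})$, so $[\phi_x\psi]_y=[\tau_x\psi]_y+[(\phi_x-\tau_x)\psi]_y$ lies in $TC(\mathcal{R})[x]$, the first term by part (1) and the second because $TC(\mathcal{R})$ is a Lie conformal subalgebra of $gc(\mathcal{R})$ by Proposition \ref{pro3.4}.

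For (3), take $\phi_x\in CTDer(\mathcal{R})$ and $\psi_x\in TQC(\mathcal{R})$; the goal is $[[[\phi_x\psi]_y(a)_{\lambda+y}b]_{\lambda+\mu+y}c]=[[a_{\lambda}b]_{\lambda+\mu}[\phi_x\psi]_y(c)]$. Expanding the left side gives $[[\phi_x\psi_{y-x}(a)_{\lambda+y}b]_{\lambda+\mu+y}c]-[[\psi_{y-x}\phi_x(a)_{\lambda+y}b]_{\lambda+\mu+y}c]$. The second piece becomes $[[\phi_x(a)_{\lambda+x}b]_{\lambda+\mu+x}\psi_{y-x}(c)]$ directly by (\ref{eq3.4}) for $\psi$ at parameter $y-x$. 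For the first piece I would run (\ref{eq3.1}) for $\phi_x$ backwards: $[[\phi_x\psi_{y-x}(a)_{\lambda+y}b]_{\lambda+\mu+y}c]$ equals $\phi_x([[\psi_{y-x}(a)_{\lambda+y-x}b]_{\lambda+\mu+y-x}c])$ minus the ``slot $b$'' and ``slot $c$'' terms; then invoke (\ref{eq3.4}) for $\psi$ and the Remark \ref{rm3.3}(2) reformulations to rewrite each of those three expressions with $\psi_{y-x}$ sitting on slot $c$, apply (\ref{eq3.1}) once more to $\phi_x([[a_{\lambda}b]_{\lambda+\mu}\psi_{y-x}(c)])$, and cancel; the first piece collapses to $[[\phi_x(a)_{\lambda+x}b]_{\lambda+\mu+x}\psi_{y-x}(c)]+[[a_{\lambda}b]_{\lambda+\mu}(\phi_x\psi_{y-x}-\psi_{y-x}\phi_x)(c)]$. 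Subtracting the second piece, the $[[\phi_x(a)_{\lambda+x}b]_{\lambda+\mu+x}\psi_{y-x}(c)]$ terms cancel and what remains is $[[a_{\lambda}b]_{\lambda+\mu}[\phi_x\psi]_y(c)]$, i.e. (\ref{eq3.4}) for $[\phi_x\psi]_y$.

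The algebra here is nothing beyond conformal bilinearity, so the genuine work is the bookkeeping of the spectral-parameter shifts ($\pm x$, $\pm y$) generated by composing conformal maps. Part (3) is the delicate one, since there the triple-derivation identity (\ref{eq3.1}) is composed with the quasicentroid identity (\ref{eq3.4}) twice; I expect the main obstacle to be organizing the cancellations, and the device that makes it work is to choose, at each rewriting step, the Remark \ref{rm3.3} reformulation that isolates the slot not yet acted on.
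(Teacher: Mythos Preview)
Your argument is correct. For parts (1) and (3) it is essentially the paper's own computation: the paper only writes out (1) explicitly, starting from $[[\phi_x\psi_{y-x}(a)_{\lambda+y}b]_{\lambda+\mu+y}c]$ and working toward $[\phi_x\psi]_y([[a_\lambda b]_{\lambda+\mu}c])$, whereas you start from the latter and work toward the former, but the intermediate expressions and cancellations are identical; the paper then declares (2) and (3) ``similar'', and your handling of (3) is exactly the analogous direct verification. Where you genuinely diverge is part (2): rather than rerun the computation with identity (\ref{eq3.2}) in place of (\ref{eq3.1}), you invoke Remark~\ref{rm3.5} to decompose any $\phi_x\in GCTDer(\mathcal{R})$ as $\tau_x+(\phi_x-\tau_x)\in CTDer(\mathcal{R})+TC(\mathcal{R})$ and then appeal to part (1) and to the subalgebra property of $TC(\mathcal{R})$ from Proposition~\ref{pro3.4}. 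This reduction is cleaner and makes the logical structure among the three parts visible (in effect it shows $GCTDer(\mathcal{R})=CTDer(\mathcal{R})+TC(\mathcal{R})$ once and for all); the paper's parallel-computation approach, by contrast, keeps the three statements independent of one another and of Proposition~\ref{pro3.4}.
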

\begin{proof}
	
	Obviously, we only need to prove the first case, other cases can be proved similarly.
	Since $\phi_x\in CTDer(\mathcal{R})$, $\psi_x \in TC(\mathcal{R})$, for any $a, b, c \in \mathcal{R}$, we can obtain that
	\begin{align*}
	&[[\phi_x\psi_{y- x}(a)_{\lambda+ y}b]_{\lambda+ \mu+ y}c]\\
	= &\phi_x([[\psi_{y-x}(a)_{\lambda+ y- x}b]_{\lambda+ \mu+ y- x}c])-  [[\psi_{y- x}(a)_{\lambda+ y- x}\phi_x(b)]_{\lambda+ \mu+ y}c]\\
	&- [[\psi_{y- x}(a)_{\lambda+ y- x}b]_{\lambda+ \mu+ y- x}\phi_x(c)]\\
	= &\phi_x\psi_{y- x}([[a_{\lambda}b]_{\lambda+ \mu}c])- \psi_{y- x}([[a_{\lambda}\phi_x(b)]_{\lambda+ \mu+ x}c])- \psi_{y- x}([[a_{\lambda}b]_{\lambda+ \mu}\phi_x(c)]).
	\end{align*}
	Similarly, we have
	\begin{align*}
	&[[\psi_{y- x}\phi_x(a)_{\lambda+ y}b]_{\lambda+ \mu+ y}c]\\
	= &\psi_{y- x}([[\phi_x(a)_{\lambda+ x}b]_{\lambda+ \mu+ x}c])\\
	= &\psi_{y- x}\phi_x([[a_{\lambda}b]_{\lambda+ \mu}c])- \psi_{y- x}([[a_{\lambda}\phi_x(b)]_{\lambda+ \mu+ x}c])- \psi_{y- x}([[a_{\lambda}b]_{\lambda+ \mu}\phi_x(c)]).
	\end{align*}
	Hence, we can obtain that
	\begin{align*}
	[[[\phi_x\psi]_y(a)_{\lambda+ y}b]_{\lambda+ \mu+ y}c]= [\phi_x\psi]_y([[a_{\lambda}b]_{\lambda+ \mu}c]).
	\end{align*}
	Thus, we have $[\phi_x\psi]_y \in TC(\mathcal{R})$.
	
	This completes the proof.
\end{proof}

\section{Classification of conformal triple derivations on simple Lie conformal algebras}
In this section, we classify (generalized) conformal triple derivations of all finite simple Lie conformal algebras. For convenience, we always assume that $\mathcal{R}$ is a finite simple Lie conformal algebra in this section unless otherwise specified.

It is easy to see that $CDer(\mathcal{R})$ and $CInn(\mathcal{R})$ are Lie conformal subalgebras of $CTDer(\mathcal{R})$. Furthermore, we have the following lemma.
\begin{lemma}\label{lm4.1}
	$CInn(\mathcal{R})$ is an ideal of Lie conformal algebra $CTDer(\mathcal{R})$.
\end{lemma}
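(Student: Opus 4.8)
The plan is to show that $CInn(\mathcal{R})$ is closed under the $\lambda$-bracket of $gc(\mathcal{R})$ with elements of $CTDer(\mathcal{R})$; since we already know $CInn(\mathcal{R})\subseteq CDer(\mathcal{R})\subseteq CTDer(\mathcal{R})$ and that $CTDer(\mathcal{R})$ is a Lie conformal subalgebra (Proposition \ref{pro3.4}), it suffices to prove that for every $\phi_x\in CTDer(\mathcal{R})$ and every $a\in\mathcal{R}$ we have $[\phi_x\,(\operatorname{ad}a)]_y\in CInn(\mathcal{R})[x]$. The natural guess is that this bracket equals $\operatorname{ad}(\phi_x(a))$, up to the appropriate shift of indeterminates, so the first concrete step is to compute $[\phi_x\,(\operatorname{ad}a)]_y(b)$ from the definition of the bracket on $gc(\mathcal{R})$ in Definition \ref{def2.7}, namely $[\phi_x\,(\operatorname{ad}a)]_y(b)=\phi_x([a_{\,y-x}\,b])-[a_{\,y-x}\,\phi_x(b)]$ for $b\in\mathcal{R}$.

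The key idea is that although $\phi_x$ is only a \emph{triple} derivation, not an ordinary derivation, one can still control $\phi_x$ on a double bracket, and in a simple Lie conformal algebra every element lies in (a sum of) such double brackets because $\mathcal{R}=[\mathcal{R}_\lambda\mathcal{R}]$ — indeed $[\mathcal{R}_\lambda\mathcal{R}]$ is a nonzero ideal, hence all of $\mathcal{R}$. So the second step is: fix $a=[[p_{\alpha}q]_{\alpha+\beta}r]$ (or a $\mathbb{C}[\partial]$-linear combination of such; by sesquilinearity and $\mathbb{C}$-linearity it is enough to treat a single such term), and use Definition \ref{def3.1}, equation (\ref{eq3.1}), to expand $\phi_x([[[p_{\alpha}q]_{\alpha+\beta}r]_{\,y-x}\,b])$. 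Here I would regard $[[p_{\alpha}q]_{\alpha+\beta}r]$ as the \emph{first} slot of the outer triple bracket $[[\,\cdot\,_{\,y-x}\,b\,]\cdots]$ — more precisely write $[a_{\,y-x}\,b]=[[[p_{\alpha}q]_{\alpha+\beta}r]_{\,y-x}\,b]$ and apply (\ref{eq3.1}) with the roles arranged so that the product $[p_{\alpha}q]$ and $r$ and $b$ are the three arguments, using skew-symmetry and the Jacobi identity (exactly as in the proof of Lemma \ref{lm3.1}(1)) to bring things into the form $\phi_x([[u_{\gamma}v]_{\gamma+\delta}w])$. Collecting the resulting terms, the three ``$\phi_x$ hits one of $p,q,r$'' pieces should reassemble into $[\phi_x(a)_{\,y}\,b]$ by (\ref{eq3.1}) read backwards, while the ``$\phi_x$ hits $b$'' piece is exactly $[a_{\,y-x}\,\phi_x(b)]$, so that $[\phi_x\,(\operatorname{ad}a)]_y(b)=[\phi_x(a)_{\,y}\,b]=(\operatorname{ad}\phi_x(a))_y(b)$, giving $[\phi_x\,(\operatorname{ad}a)]_y=\operatorname{ad}(\phi_x(a))\in CInn(\mathcal{R})[x]$.

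The main obstacle is purely bookkeeping: keeping the shifted spectral parameters ($\lambda$ versus $\lambda+x$, $-\partial-\mu$ from skew-symmetry, etc.) consistent while repeatedly applying skew-symmetry and Jacobi to move $a$ into slot positions where (\ref{eq3.1}) applies, and making sure the reassembly into $[\phi_x(a)_{\,y}\,b]$ really uses (\ref{eq3.1}) in the direction stated. One must also handle the reduction ``$\mathcal{R}$ is spanned over $\mathbb{C}[\partial]$ by triple brackets'' cleanly — this follows because $[\mathcal{R}_\lambda[\mathcal{R}_\mu\mathcal{R}]]$ spans a nonzero $\mathbb{C}[\partial]$-submodule which is an ideal of the simple algebra $\mathcal{R}$ (nonzero since $\mathcal{R}$ is not abelian and $[\mathcal{R}_\lambda\mathcal{R}]=\mathcal{R}$), hence equals $\mathcal{R}$ — and then extend the identity to arbitrary $a$ by $\mathbb{C}[\partial]$-sesquilinearity of the bracket and $\mathbb{C}$-linearity of $\phi_x$, noting $\operatorname{ad}(\partial a)_\lambda=-\lambda\operatorname{ad}(a)_\lambda$ is compatible with $(\partial\,\operatorname{ad}a)_\lambda$. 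Once the identity $[\phi_x\,(\operatorname{ad}a)]_y=\operatorname{ad}(\phi_x(a))$ is established for all $a$, together with the already-known ideal-closure direction $[CInn(\mathcal{R})_\lambda CInn(\mathcal{R})]\subseteq CInn(\mathcal{R})[\lambda]$ (which is immediate from the Jacobi identity, since $[\operatorname{ad}a\,{}_\lambda\operatorname{ad}b]=\operatorname{ad}[a_\lambda b]$), the lemma follows.
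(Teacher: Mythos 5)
Your overall strategy is the paper's: use simplicity to write $a$ in terms of $\lambda$-brackets, expand $[\phi_x\,(\operatorname{ad}a)]_y(b)=\phi_x([a_{\,y-x}\,b])-[a_{\,y-x}\,\phi_x(b)]$ via the triple-derivation axiom, watch the $\phi_x(b)$ term cancel, and recognize what remains as an inner derivation. But two points need fixing. First, you decompose $a$ into \emph{double} brackets $[[p_{\alpha}q]_{\alpha+\beta}r]$; this is both unnecessary and harmful. Simplicity already gives $[\mathcal{R}_\lambda\mathcal{R}]=\mathcal{R}$, so it suffices to write $a=\sum\frac{d^i}{d\lambda^i}[{a^1}_\lambda a^2]\big|_{\lambda=0}$; then $[a_{\,y-x}\,b]$ is literally of the form $[[{a^1}_\lambda a^2]_{\lambda+\mu}b]$ and (\ref{eq3.1}) applies in one step with arguments $a^1,a^2,b$ --- no Jacobi or skew-symmetry manipulation is needed. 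With your triple-bracket decomposition, applying (\ref{eq3.1}) to $[[[p_{\alpha}q]_{\alpha+\beta}r]_{\,y-x}\,b]$ necessarily puts $\phi_x$ on the block $[p_{\alpha}q]$ as a whole (or, after Jacobi rewriting, on some other bracket of two of $p,q,r,b$), never on $p$, $q$, $r$ individually, so the ``three pieces where $\phi_x$ hits one of $p,q,r$'' that you plan to collect do not arise.

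Second, and more seriously, the identity you aim for, $[\phi_x\,(\operatorname{ad}a)]_y=\operatorname{ad}(\phi_x(a))_y$, is not obtainable at this stage. What the computation actually yields is $\operatorname{ad}(C)_y$ with $C=\sum\frac{d^i}{d\lambda^i}\bigl([\phi_x(a^1)_{\lambda+x}a^2]+[{a^1}_\lambda\phi_x(a^2)]\bigr)\big|_{\lambda=0}$ (the paper's $\delta_{\phi_x}(a)$ of Lemma \ref{lm4.2}), and $C=\phi_x(a)$ holds precisely when $\phi_x([{a^1}_\lambda a^2])-[\phi_x(a^1)_{\lambda+x}a^2]-[{a^1}_\lambda\phi_x(a^2)]$ lies in $Z(\mathcal{R})=0$, i.e.\ when $\phi_x$ is already a conformal derivation --- that is Theorem \ref{tm4.4}, which is deduced \emph{from} this lemma via Lemmas \ref{lm4.2} and \ref{lm4.3}, so invoking it here would be circular. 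Fortunately the lemma only requires the bracket to be $\operatorname{ad}$ of \emph{some} element of $\mathcal{R}[x]$, which the computation does deliver; your proof is therefore repairable by dropping the identification with $\operatorname{ad}(\phi_x(a))$, but the reassembly step as you describe it is not correct.
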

\begin{proof}
	Let $\phi_x \in CTDer(\mathcal{R})$, $a \in \mathcal{R}$. Since $\mathcal{R}$ is a simple Lie conformal algebra, then there exists some finite index set $I, J \subseteq \mathbb{Z^+}$ and $a_{ij}^1, a_{ij}^2 \in \mathcal{R}$ such that $a= \sum_{i\in I, j\in J}\frac{d^i}{d\lambda^i}[{a_{ij}^1}_\lambda {a_{ij}^2}]|_{\lambda=0}$.
	
	For any $b\in \mathcal{R}$, we have
	\begin{align*}
	[\phi_x ad a]_y(b)&= \phi_x ada_{y- x}(b)- ada_{y- x}\phi_x(b)\\
	= &\phi_x ([a_{y- x}b])- [a_{y- x}\phi_x(b)]\\
	= &\phi_x([(\sum_{i\in I, j\in J}\frac{d^i}{d\lambda^i}[{a_{ij}^1}_\lambda {a_{ij}^2}]|_{\lambda=0})_{y- x}b])- [(\sum_{i\in I, j\in J}\frac{d^i}{d\lambda^i}[{a_{ij}^1}_\lambda {a_{ij}^2}]|_{\lambda=0})_{y- x}\phi_x(b)]\\
	= &\sum_{i\in I, j\in J}\frac{d^i}{d\lambda^i}([[\phi_x(a_{ij}^1)_{\lambda+ x} {a_{ij}^2}]_{y}b]+ [[{a_{ij}^1}_\lambda \phi_x({a_{ij}^2})]_{y}b]+ [[{a_{ij}^1}_\lambda {a_{ij}^2}]_{y- x}\phi_x(b)])|_{\lambda=0}\\
	&- [(\sum_{i\in I, j\in J}{\frac{d^i}{d\lambda^i}[{a_{ij}^1}_\lambda {a_{ij}^2}]|_{\lambda=0}})_{y- x}\phi_x(b)]\\
	= &\sum_{i\in I, j\in J}\frac{d^i}{d\lambda^i}([[\phi_x(a_{ij}^1)_{\lambda+ x} {a_{ij}^2}]_{y}b]+ [[{a_{ij}^1}_\lambda \phi_x({a_{ij}^2})]_{y}b])|_{\lambda=0}\\
	= &ad(\sum_{i\in I, j\in J}\frac{d^i}{d\lambda^i}([\phi_x(a_{ij}^1)_{\lambda+ x} {a_{ij}^2}]+ [ {a_{ij}^1}_\lambda \phi_x({a_{ij}^2})])|_{\lambda=0})_{y}(b).
	\end{align*}
	By the arbitrariness of $b$, $[\phi_x ad a]_y$ is an inner conformal derivation. Thus, $CInn(\mathcal{R})$ is an ideal of Lie conformal algebra $CTDer(\mathcal{R})$.
\end{proof}

In fact, there is another connection between $CDer(\mathcal{R})$ and $CTDer(\mathcal{R})$, which is stronger than the relationship between them as shown in the above lemma. Thus, we have the following lemma.

\begin{lemma}\label{lm4.2}
	There exists a $\mathbb{C}$-linear map $\delta: CTDer(\mathcal{R})\to CDer(\mathcal{R})$, $\phi_x \mapsto \delta_{\phi_x}$ such that for all $a \in \mathcal{R}$, $\phi_x \in CTDer(\mathcal{R})$, one has $[\phi_x ad a]_y= ad {\delta_{\phi_x}}(a)_y$.
\end{lemma}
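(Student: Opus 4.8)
The plan is to read off $\delta$ directly from Lemma~\ref{lm4.1}, exploiting that a finite simple Lie conformal algebra has trivial center. Since $Z(\mathcal{R})$ is an ideal and $\mathcal{R}$ is non-abelian, $Z(\mathcal{R})=0$, so $a\mapsto ad\,a$ is injective on $\mathcal{R}$; comparing coefficients of powers of $x$, the extension $D\mapsto ad\,D$ is an injective $\mathbb{C}$-linear map of $\mathbb{C}[x]\otimes\mathcal{R}$ onto $CInn(\mathcal{R})[x]$. By Lemma~\ref{lm4.1} and its proof, for every $\phi_x\in CTDer(\mathcal{R})$ and every $a\in\mathcal{R}$ the element $[\phi_x\,ad\,a]_y$ lies in $CInn(\mathcal{R})[x]$; hence there is a \emph{unique} $\delta_{\phi_x}(a)\in\mathbb{C}[x]\otimes\mathcal{R}$ with $[\phi_x\,ad\,a]_y=ad\,\delta_{\phi_x}(a)_y$, and this defines the map $a\mapsto\delta_{\phi_x}(a)$. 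In particular the presentation-dependent formula $\delta_{\phi_x}(a)=\sum_{i,j}\frac{d^i}{d\lambda^i}\big([\phi_x(a_{ij}^1)_{\lambda+x}a_{ij}^2]+[(a_{ij}^1)_\lambda\phi_x(a_{ij}^2)]\big)|_{\lambda=0}$ obtained in the proof of Lemma~\ref{lm4.1} is automatically independent of the chosen presentation $a=\sum_{i,j}\frac{d^i}{d\lambda^i}[(a_{ij}^1)_\lambda a_{ij}^2]|_{\lambda=0}$, since any two candidate values have the same image under the injective $ad$.

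It then remains to check that $\phi_x\mapsto\delta_{\phi_x}$ is $\mathbb{C}$-linear and that each $\delta_{\phi_x}$ is a conformal derivation (with parameter $x$); both reduce, via injectivity of $ad$, to identities among inner derivations. For $\mathbb{C}$-linearity, $ad\,\delta_{\phi_x+\psi_x}(a)_y=[(\phi_x+\psi_x)\,ad\,a]_y=ad\,(\delta_{\phi_x}(a)+\delta_{\psi_x}(a))_y$, and similarly for scalars. For conformal linearity of $\delta_{\phi_x}$, I would use $ad(\partial a)=\partial(ad\,a)$ in $gc(\mathcal{R})$ together with conformal sesquilinearity of the bracket of $gc(\mathcal{R})$ to get $ad\,\delta_{\phi_x}(\partial a)_y=[\phi_x\,\partial(ad\,a)]_y=(\partial+x)[\phi_x\,ad\,a]_y=ad\big((\partial+x)\delta_{\phi_x}(a)\big)_y$. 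For the Leibniz rule, start from the Jacobi-identity restatement $ad[a_\mu b]=[ad\,a_\mu\,ad\,b]$, apply the Jacobi identity of $gc(\mathcal{R})$ to $[\phi_x\,[ad\,a_\mu\,ad\,b]]_y$, and substitute $[\phi_x\,ad\,a]=ad\,\delta_{\phi_x}(a)$ together with $[ad\,u_\nu\,ad\,v]=ad[u_\nu v]$ to reach $ad\,\delta_{\phi_x}([a_\mu b])_y=ad\big([\delta_{\phi_x}(a)_{x+\mu}b]+[a_\mu\delta_{\phi_x}(b)]\big)_y$; injectivity of $ad$ then gives exactly the defining identity of a conformal derivation. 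Finally, $[\phi_x\,ad\,a]_y=ad\,\delta_{\phi_x}(a)_y$ is the defining property of $\delta_{\phi_x}(a)$, which is the asserted identity.

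I expect the only genuinely delicate point to be the bookkeeping of the spectral variables $x$, $y$, $\mu$ when transporting identities among $\mathcal{R}$, $gc(\mathcal{R})$, $CInn(\mathcal{R})$ and $CDer(\mathcal{R})$: one must keep straight that $[\phi_x\,ad\,a]$ retains the parameter $x$ of $\phi$ alongside the free conformal variable of $gc(\mathcal{R})$, so that $\delta_{\phi_x}$ naturally lands in $CDer(\mathcal{R})[x]$ and $ad\,\delta_{\phi_x}(a)$ is again an inner derivation of that shape. Once the conventions of Definitions~\ref{def2.4}--\ref{def2.7} are fixed, each of the three verifications above is a short computation, with the substantive input throughout being $Z(\mathcal{R})=0$, equivalently the injectivity of $ad$; I anticipate no further obstacle.
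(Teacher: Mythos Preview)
Your proposal is correct and follows essentially the same approach as the paper: both extract $\delta_{\phi_x}$ from the computation in Lemma~\ref{lm4.1}, use $Z(\mathcal{R})=0$ to secure well-definedness, and verify the derivation property via the Jacobi identity in $gc(\mathcal{R})$ together with $ad[a_\mu b]=[ad\,a_\mu\,ad\,b]$. Your packaging is slightly cleaner---you phrase everything through injectivity of $ad$ and check conformal linearity abstractly via sesquilinearity in $gc(\mathcal{R})$ rather than through the explicit formula---but the substance is the same.
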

\begin{proof}
	By the proof of Lemma \ref{lm4.1}, we  define a conformal linear endomorphism $\delta_{\phi_x}$ on $\mathcal{R}$, such that for any $a= \sum_{i\in I, j\in J}\frac{d^i}{d\lambda^i}[{a_{ij}^1}_\lambda {a_{ij}^2}]|_{\lambda=0} \in \mathcal{R}$,
	\begin{align*}
	\delta_{\phi_x}(a)= \sum_{i\in I, j\in J}\frac{d^i}{d\lambda^i}([\phi_x(a_{ij}^1)_{\lambda+ x} {a_{ij}^2}]+ [ {a_{ij}^1}_\lambda \phi_x({a_{ij}^2})])|_{\lambda=0}.
	\end{align*}
	At first, $\delta_{\phi_x}(a)$ does not depend on the choice of expression of $a$. For proving it, take
	\begin{align*}
	A&= \sum_{i\in I, j\in J}\frac{d^i}{d\lambda^i}([\phi_x(a_{ij}^1)_{\lambda+ x} {a_{ij}^2}]+ [ {a_{ij}^1}_\lambda \phi_x({a_{ij}^2})])|_{\lambda=0},\\
	B&= \sum_{s\in I^{'}, t\in J^{'}}\frac{d^s}{d\lambda^s}([\phi_x(b_{st}^1)_{\lambda+ x} {b_{st}^2}]+ [ {b_{st}^1}_\lambda \phi_x({b_{st}^2})])|_{\lambda=0},
	\end{align*}
	where $a$ can also be expressed in the form $a=  \sum_{s\in I^{'}, t\in J^{'}}\frac{d^s}{d\lambda^s}[{b_{st}^1}_\lambda {b_{st}^2}]|_{\lambda=0}$. Since $\phi_x \in CTDer(\mathcal{R})$, for any $c \in \mathcal{R}$, we can deduce that
	\begin{align*}
	[A_{\mu+ x}c]= \phi_x([a_{\mu}c])- [a_{\mu}\phi_x(c)]= [B_{\mu+ x}c].
	\end{align*}
	Hence, $[(A- B)_{\mu+ x}c]= 0$, for any $c \in \mathcal{R}$, i.e. $A- B\in Z(\mathcal{R})$. Obviously, we have $Z(\mathcal{R})= 0$, because $\mathcal{R}$ is a simple Lie conformal algebra. Then, we obtain that $A= B$. Therefore, $\delta_{\phi_x}$ is well-defined. Furthermore, it follows immediately from the proof of Lemma \ref{lm4.1}  that $[\phi_x ad a]_y= ad {\delta_{\phi_x}}(a)_y$.
	
	Next, we need to prove that $\delta_{\phi_x}$ is a conformal linear map, i.e. $\delta_{\phi_x}(\partial a)= (\partial+ x)\delta_{\phi_x}(a)$. For any $a= \sum_{i\in I, j\in J}\frac{d^i}{d\lambda^i}[{a_{ij}^1}_\lambda {a_{ij}^2}]|_{\lambda=0} \in \mathcal{R}$, we have $\partial a= \sum_{i\in I, j\in J}\frac{d^i}{d\lambda^i}([{\partial a_{ij}^1}_\lambda {a_{ij}^2}] + [{a_{ij}^1}_\lambda \partial {a_{ij}^2}])|_{\lambda=0}$. Thus, we can obtain that
	\begin{align*}
	\delta_{\phi_x}(\partial a)
	=& \sum_{i\in I, j\in J}\frac{d^i}{d\lambda^i}([\phi_x(\partial a_{ij}^1)_{\lambda+ x} {a_{ij}^2}]+ [{\partial a_{ij}^1}_\lambda \phi_x({a_{ij}^2})]+ [\phi_x(a_{ij}^1)_{\lambda+ x} {\partial  a_{ij}^2}]+ [{a_{ij}^1}_\lambda \phi_x({\partial  a_{ij}^2})])|_{\lambda=0}\\
	=& \sum_{i\in I, j\in J}\frac{d^i}{d\lambda^i}(- \lambda([\phi_x(a_{ij}^1)_{\lambda+ x} {a_{ij}^2}]+ [{a_{ij}^1}_\lambda \phi_x({a_{ij}^2})])\\
	&+ (\partial +\lambda+ x)([\phi_x(a_{ij}^1)_{\lambda+ x} { a_{ij}^2}]+ [{a_{ij}^1}_\lambda \phi_x({a_{ij}^2})]))|_{\lambda=0}\\
	=& (\partial+ x)\sum_{i\in I, j\in J}\frac{d^i}{d\lambda^i}([\phi_x(a_{ij}^1)_{\lambda+ x} { a_{ij}^2}]+ [{a_{ij}^1}_\lambda \phi_x({a_{ij}^2})])|_{\lambda=0}\\
	=& (\partial+ x)\delta_{\phi_x}(a).
	\end{align*}
	
	Finally, we  prove that $\delta_{\phi_x}$ is a conformal derivation. By the above discussion, for any  $\phi_x \in CTDer(\mathcal{R})$, $a, b \in \mathcal{R}$, we have $[\phi_x ad[a_\lambda b]]= ad \delta_{\phi_x} ([a_\lambda b])$. On the other hand, we have
	\begin{align*}
	[\phi_x ad[a_\lambda b]]
	&= [\phi_x [ad a_\lambda ad b]]\\
	&= [[\phi_x ad a]_{\lambda+x} ad b]+ [ad a_\lambda [\phi_x ad b]]\\
	&= [ad \delta_{\phi_x}(a)_{\lambda+x} ad b]+ [ad a_{\lambda} ad \delta_{\phi_x}(b)]\\
	&= ad ([\delta_{\phi_x}(a)_{\lambda+x} b]+ [a_{\lambda} \delta_{\phi_x}(b)]).
	\end{align*}
	Therefore, $ad \delta_{\phi_x} ([a_\lambda b])= ad ([\delta_{\phi_x}(a)_{\lambda+ x} b]+ [a_{\lambda} \delta_{\phi_x}(b)])$. Since $Z(\mathcal{R})= 0$,  we have $\delta_{\phi_x} ([a_\lambda b])= [\delta_{\phi_x}(a)_{\lambda+ x} b]+ [a_{\lambda} \delta_{\phi_x}(b)]$. By the arbitrariness of $a, b$, we can deduce that $\delta_{\phi_x}  \in  CDer(\mathcal{R})$.
	
	This completes the proof.
\end{proof}

\begin{lemma}\label{lm4.3}
	The centralizer of $CInn(\mathcal{R})$ in $CTDer(\mathcal{R})$ is trivial, i.e., $C_{CTDer(\mathcal{R})}(CInn(\mathcal{R}))= 0$. In particular, the center of $CTDer(\mathcal{R})$ is zero.
\end{lemma}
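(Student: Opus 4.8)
The plan is to show that every $\phi_x$ lying in $C_{CTDer(\mathcal{R})}(CInn(\mathcal{R}))$ must kill every $\lambda$-bracket, so that its image lands in $Z(\mathcal{R})$, which is zero because $\mathcal{R}$ is simple.

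First I would fix $\phi_x\in CTDer(\mathcal{R})$ with $[\phi_x\, ad\,a]_y=0$ for all $a\in\mathcal{R}$ (this is enough, since the $ad\,a$'s generate $CInn(\mathcal{R})$ as a $\mathbb{C}[\partial]$-module) and extract two elementary identities. Expanding $0=[\phi_x\, ad\,a]_y(b)=\phi_x([a_{y-x}b])-[a_{y-x}\phi_x(b)]$ and renaming $y-x$ as $\mu$ gives the \emph{centralizing identity} $\phi_x([a_\mu b])=[a_\mu\phi_x(b)]$ for all $a,b$. On the other hand, Lemma~\ref{lm4.2} gives $0=[\phi_x\, ad\,a]_y=ad\,\delta_{\phi_x}(a)_y$, so $\delta_{\phi_x}(a)\in Z(\mathcal{R})$ for every $a$; since $Z(\mathcal{R})=0$ we get $\delta_{\phi_x}=0$. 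Feeding $\delta_{\phi_x}=0$ into the explicit formula for $\delta_{\phi_x}$ from the proof of Lemma~\ref{lm4.2}, evaluated on the elements $\frac{d^i}{d\lambda^i}[u_\lambda v]|_{\lambda=0}$ for all $i\ge 0$, forces the polynomial $[\phi_x(u)_{\lambda+x}v]+[u_\lambda\phi_x(v)]$ to vanish identically, i.e. $[\phi_x(a)_{\mu+x}b]+[a_\mu\phi_x(b)]=0$ for all $a,b$.

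The main obstacle is the next step: promoting the centralizing identity to its ``centroid form'' $\phi_x([a_\mu b])=[\phi_x(a)_{\mu+x}b]$. Here I would use skew-symmetry $[a_\mu b]=-[b_{-\mu-\partial}a]$, apply $\phi_x$, and push $\phi_x$ through the substitution $\mu\mapsto-\mu-\partial$ via conformal linearity in the form $\phi_x(Q(\partial)w)=Q(\partial+x)\phi_x(w)$; the centralizing identity then rewrites $\phi_x([b_{-\mu-\partial}a])$ as $[b_{-\mu-\partial-x}\phi_x(a)]$, and a second use of skew-symmetry turns this into $-[\phi_x(a)_{\mu+x}b]$. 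All of the bookkeeping with the $(-\mu-\partial)$-substitutions and the extra $x$-shift coming from conformal linearity lives in this step, although it is routine in exactly the way the skew-symmetry manipulations in Lemma~\ref{lm3.1} are.

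With the three identities in hand I would finish by combining them: the centroid form together with $[\phi_x(a)_{\mu+x}b]+[a_\mu\phi_x(b)]=0$ gives $\phi_x([a_\mu b])+[a_\mu\phi_x(b)]=0$, while the centralizing identity gives $\phi_x([a_\mu b])=[a_\mu\phi_x(b)]$; hence $[a_\mu\phi_x(b)]=0$ for all $a,b$, i.e. $\phi_x(b)\in Z(\mathcal{R})=0$, so $\phi_x=0$. (Alternatively, one could skip Lemma~\ref{lm4.2} and instead substitute the centralizing and centroid identities into the defining identity~\eqref{eq3.1}: each of the three terms on its right-hand side then equals $\phi_x([[a_\lambda b]_{\lambda+\mu}c])$, forcing this triple bracket to be annihilated by $\phi_x$, which again yields $\phi_x=0$ by simplicity.) This proves $C_{CTDer(\mathcal{R})}(CInn(\mathcal{R}))=0$, and the ``in particular'' is then immediate, since the center of $CTDer(\mathcal{R})$ is contained in $C_{CTDer(\mathcal{R})}(CInn(\mathcal{R}))$ because $CInn(\mathcal{R})\subseteq CTDer(\mathcal{R})$.
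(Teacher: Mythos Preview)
Your argument is correct. In fact your parenthetical ``alternative'' is precisely the paper's proof: from the centralizing identity $\phi_x([a_\mu b])=[a_\mu\phi_x(b)]$ and its skew-symmetry partner $\phi_x([a_\mu b])=[\phi_x(a)_{\mu+x}b]$, the paper substitutes into the triple-derivation identity~\eqref{eq3.1} to obtain $\phi_x([[a_\lambda b]_{\lambda+\mu}c])=3\,\phi_x([[a_\lambda b]_{\lambda+\mu}c])$, hence $\phi_x$ kills all triple brackets and vanishes by simplicity.

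Your primary route is a genuine variant: instead of invoking~\eqref{eq3.1} again, you feed the centralizer condition into Lemma~\ref{lm4.2} to get $\delta_{\phi_x}=0$, and then read off $[\phi_x(a)_{\mu+x}b]+[a_\mu\phi_x(b)]=0$ from the explicit formula for $\delta_{\phi_x}$ (evaluated on each coefficient $\frac{d^i}{d\lambda^i}[u_\lambda v]|_{\lambda=0}$). Combined with the centroid form this gives $[a_\mu\phi_x(b)]=0$ directly, so $\phi_x(\mathcal{R})\subseteq Z(\mathcal{R})=0$. This buys you a slightly stronger conclusion at the single-bracket level (not just at the triple-bracket level), at the cost of depending on the machinery of Lemma~\ref{lm4.2}; the paper's route is more self-contained but has to pass through simplicity a second time via triple brackets. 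Either way the key computational step---deriving the centroid form from the centralizing identity via skew-symmetry and conformal sesquilinearity---is the same, and you handle it correctly.
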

\begin{proof}
	Suppose that $\phi_x \in C_{CTDer(\mathcal{R})}(CInn(\mathcal{R}))$. Then we have $[\phi_x ad a]= 0$ for any $a \in \mathcal{R}$. Thus, for any $b \in \mathcal{R}$, we can obtain that $\phi_x ([a_{\lambda}b])- [a_\lambda \phi_x (b)]= [\phi_x ad a]_{\lambda+ x}(b)= 0$. Moreover, we have $\phi_x ([a_{\lambda}b])= [a_\lambda \phi_x (b)]= [\phi_x (a)_{\lambda+ x} b]$.
	
	On the one hand, we have
	\begin{align*}
	\phi_x([[a_{\lambda}b]_{\lambda+ \mu}c])= [[\phi_x(a)_{\lambda+ x}b]_{\lambda+ \mu+ x}c]= [[a_{\lambda}\phi_x(b)]_{\lambda+ \mu+ x}c]= [[a_{\lambda}b]_{\lambda+ \mu}\phi_x(c)],
	\end{align*}
	for any $a, b, c \in \mathcal{R}$. On the other hand, we have
	\begin{align*}
	\phi_x([[a_{\lambda}b]_{\lambda+ \mu}c])= [[\phi_x(a)_{\lambda+ x}b]_{\lambda+ \mu+ x}c]+ [[a_{\lambda}\phi_x(b)]_{\lambda+ \mu+ x}c]+ [[a_{\lambda}b]_{\lambda+ \mu}\phi_x(c)],
	\end{align*}
	because $\phi_x \in CTDer(\mathcal{R})$. Hence, we can obtain that
	\begin{align*}
	\phi_x([[a_{\lambda}b]_{\lambda+\mu}c])=3\phi_x([[a_{\lambda}b]_{\lambda+ \mu}c]).
	\end{align*}
	This means $\phi_x([[a_{\lambda}b]_{\lambda+ \mu}c])= 0$ for any $a, b, c \in \mathcal{R}$. Since $\mathcal{R}$ is a simple Lie conformal algebra, every element of $\mathcal{R}$ can be expressed as the form $\sum_{i\in I, j\in J, k \in K}\frac{d^i}{d\lambda^i}\frac{d^k}{d\mu^k}[[{a_{ijk}^1}_\lambda {a_{ijk}^2}]_\mu {a_{ijk}^3} ]|_{\lambda= \mu= 0}$.
	Then we deduce that $\phi_x= 0$.
\end{proof}

Now, let us introduce the main result in this section by the following theorem, which will give the classification of the conformal triple derivations of a finite simple Lie conformal algebra.
\begin{theorem}\label{tm4.4}
	$CTDer(\mathcal{R})= CDer(\mathcal{R})$.
\end{theorem}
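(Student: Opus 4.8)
The inclusion $CDer(\mathcal{R}) \subseteq CTDer(\mathcal{R})$ is already noted in the text (every conformal derivation satisfies the triple-derivation axiom by iterating the Leibniz rule twice), so the plan is to establish the reverse inclusion $CTDer(\mathcal{R}) \subseteq CDer(\mathcal{R})$ using the map $\delta$ constructed in Lemma \ref{lm4.2}. The idea is to show that for $\phi_x \in CTDer(\mathcal{R})$ the difference $\phi_x - \delta_{\phi_x}$ is forced to vanish. First I would record what Lemma \ref{lm4.2} gives: $[\phi_x\, \mathrm{ad}\,a]_y = \mathrm{ad}\,\delta_{\phi_x}(a)_y$ for all $a \in \mathcal{R}$, and $\delta_{\phi_x} \in CDer(\mathcal{R}) \subseteq CTDer(\mathcal{R})$. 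Hence $\psi_x := \phi_x - \delta_{\phi_x}$ lies in $CTDer(\mathcal{R})$ (it is a subalgebra, Proposition \ref{pro3.4}) and satisfies $[\psi_x\, \mathrm{ad}\,a]_y = 0$ for every $a$, i.e. $\psi_x \in C_{CTDer(\mathcal{R})}(CInn(\mathcal{R}))$.

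Now I invoke Lemma \ref{lm4.3}: the centralizer of $CInn(\mathcal{R})$ inside $CTDer(\mathcal{R})$ is zero. Therefore $\psi_x = 0$, that is $\phi_x = \delta_{\phi_x} \in CDer(\mathcal{R})$. Combined with the trivial inclusion this yields $CTDer(\mathcal{R}) = CDer(\mathcal{R})$. The argument is essentially a three-line assembly of the preceding lemmas, so the write-up would be short: state the two inclusions, note $CDer(\mathcal{R}) \subseteq CTDer(\mathcal{R})$, then form $\psi_x = \phi_x - \delta_{\phi_x}$, observe it centralizes $CInn(\mathcal{R})$ by Lemma \ref{lm4.2}, and conclude it is zero by Lemma \ref{lm4.3}.

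The only point that needs a moment of care — and hence the main (minor) obstacle — is verifying that $\psi_x = \phi_x - \delta_{\phi_x}$ genuinely commutes with every inner derivation in the conformal sense, i.e. that $[\psi_x\,\mathrm{ad}\,a]_y = 0$ as an element of $CTDer(\mathcal{R})[x]$ rather than merely acting as zero on a spanning set. Since $\mathcal{R}$ is simple, $\mathcal{R} = \sum [b_{\lambda}c]$ in the appropriate sense, so $\mathrm{ad}\,a$ for $a$ ranging over $\mathcal{R}$ exhausts $CInn(\mathcal{R})$, and the computation $[\phi_x\,\mathrm{ad}\,a]_y = \mathrm{ad}\,\delta_{\phi_x}(a)_y$ together with linearity of $\delta$ (and $\delta_{\delta_{\phi_x}} = \delta_{\phi_x}$, which follows because $\delta_{\phi_x}$ is already an honest derivation) gives $[\psi_x\,\mathrm{ad}\,a]_y = \mathrm{ad}\,(\delta_{\phi_x}(a) - \delta_{\phi_x}(a))_y = 0$. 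One should also remark that $Z(\mathcal{R}) = 0$ for simple $\mathcal{R}$, which is what makes $\delta_{\phi_x}$ well defined in Lemma \ref{lm4.2} and is implicitly used again here; no further structural input about the classification of simple Lie conformal algebras is required, so the proof is uniform over $Vir$ and all $Cur\,\mathcal{G}$.
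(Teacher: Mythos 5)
Your proposal is correct and follows essentially the same route as the paper: apply Lemma \ref{lm4.2} to get $[\phi_x\,\mathrm{ad}\,a]_y=\mathrm{ad}\,\delta_{\phi_x}(a)_y$, observe that $\delta_{\phi_x}$, being a genuine conformal derivation, satisfies $[\delta_{\phi_x}\,\mathrm{ad}\,a]_y=\mathrm{ad}\,\delta_{\phi_x}(a)_y$ as well, so $\phi_x-\delta_{\phi_x}\in C_{CTDer(\mathcal{R})}(CInn(\mathcal{R}))$, and then kill it with Lemma \ref{lm4.3}. The only cosmetic difference is that you phrase the middle step via $\delta_{\delta_{\phi_x}}=\delta_{\phi_x}$, whereas the paper verifies the identity directly on elements; both amount to the same one-line computation.
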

\begin{proof}
	Assume that $\phi_x \in CTDer(\mathcal{R})$, $a \in \mathcal{R}$. By Lemma \ref{lm4.2}, we have $[\phi_x ad a]_y= ad {\delta_{\phi_x}}(a)_y$ and $\delta_{\phi_x} \in CDer(\mathcal{R})$. Moreover, for any $b \in \mathcal{R}$, we have $ad {\delta_{\phi_x}}(a)_y (b)= [\delta_{\phi_x}(a)_y b]= \delta_{\phi_x}([a_{y- x}b])- [a_{y- x}\delta_{\phi_x}(b)]= [\delta_{\phi_x} ada]_y(b)$. By the arbitrariness of $b$, we can deduce that $ad {\delta_{\phi_x}}(a)_y= [\delta_{\phi_x} ada]_y$. Hence, we have $[\phi_x ad a]_y= [\delta_{\phi_x} ada]_y$, i.e., $\phi_x- \delta_{\phi_x} \in C_{CTDer(\mathcal{R})}(CInn(\mathcal{R}))$. By Lemma \ref{lm4.3}, $\phi_x- \delta_{\phi_x}= 0$, i.e., $\phi_x= \delta_{\phi_x}$. Thus, the theorem follows from Lemma \ref{lm4.2}.\end{proof}

In \cite{DK}, the classification of all finite simple Lie conformal algebras was given as follows by D'Andrea and Kac.
\begin{proposition}
	A finite simple Lie conformal algebra is isomorphic either to $Vir$ or to the current conformal algebra $Cur\mathcal{G}$ associated to a finite dimensional simple Lie algebra $\mathcal{G}$.
\end{proposition}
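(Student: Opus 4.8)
The plan is to reduce the classification to the structure theory of linearly compact Lie algebras via the annihilation functor. First I would dispose of torsion: if $a\in\mathcal{R}$ satisfies $p(\partial)a=0$ for some nonzero $p\in\mathbb{C}[\partial]$, then $p(-\lambda)[a_\lambda b]=[p(\partial)a_\lambda b]=0$ for every $b$, forcing $[a_\lambda b]=0$; hence the $\mathbb{C}[\partial]$-torsion of $\mathcal{R}$ sits inside $Z(\mathcal{R})$, which is a proper ideal of the nonabelian simple algebra $\mathcal{R}$ and is therefore zero. So $\mathcal{R}$ is free over $\mathbb{C}[\partial]$, say of rank $n$. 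When $n=1$ one can finish by hand: writing $[L_\lambda L]=P(\partial,\lambda)L$, skew-symmetry gives $P(\partial,\lambda)=-P(\partial,-\lambda-\partial)$ and the Jacobi identity then forces $P=0$ (abelian, excluded) or, after rescaling $L$, $P(\partial,\lambda)=\partial+2\lambda$, i.e. $\mathcal{R}\cong Vir$. From now on assume $n\ge 2$ and aim at $\mathcal{R}\cong Cur\mathcal{G}$.

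Next I would pass to the \emph{annihilation Lie algebra} $\mathcal{A}=\mathcal{A}(\mathcal{R})$: the linearly compact topological Lie algebra built from the nonnegative modes of the formal distributions attached to $\mathcal{R}$, equipped with a continuous derivation $T$ induced by $\partial$. Because $\mathcal{R}$ is free of rank $n$ over $\mathbb{C}[\partial]$, $\mathcal{A}$ is a free topological module of rank $n$ over $\mathbb{C}[[t]]$ (with $T$ acting essentially as $-d/dt$), and $\mathcal{R}$ can be reconstructed from the pair $(\mathcal{A},T)$. A routine but careful translation of the axioms shows that $\mathcal{R}$ being simple and nonabelian is equivalent to $(\mathcal{A},T)$ having no nonzero proper closed $T$-stable ideal and $\mathcal{A}$ not being abelian.

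Now comes the heart of the argument. One invokes the structure theory of linearly compact Lie algebras (Cartan's classification of the simple ones --- the Cartan-type algebras $W_m,S_m,H_m,K_m$ together with the finite-dimensional simple Lie algebras --- together with its refinement describing which linearly compact Lie algebras carry a derivation $T$ admitting no proper closed $T$-stable ideal). The finiteness constraint is decisive: of the Cartan-type algebras only $W_1=\mathbb{C}[[t]]\,d/dt$ is of finite rank over $\mathbb{C}[[t]]$, and it would give $n=1$; the only remaining possibility compatible with our hypotheses is a current-type algebra $\mathcal{A}\cong\mathcal{G}\otimes\mathbb{C}[[t]]$ with $T$ acting on the $\mathbb{C}[[t]]$-factor, $\mathcal{G}$ a finite-dimensional Lie algebra. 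Since the $T$-stable closed ideals of $\mathcal{G}\otimes\mathbb{C}[[t]]$ are exactly $\mathcal{I}\otimes\mathbb{C}[[t]]$ for ideals $\mathcal{I}\trianglelefteq\mathcal{G}$, the irreducibility condition forces $\mathcal{G}$ to be simple; transporting this isomorphism back through the annihilation functor yields $\mathcal{R}\cong Cur\mathcal{G}$. Finally, the converse is a short direct verification: $Z(Vir)=Z(Cur\mathcal{G})=0$ and, using the explicit $\lambda$-brackets, any nonzero ideal of $Vir$ (resp. of $Cur\mathcal{G}$ with $\mathcal{G}$ simple) is seen to be the whole algebra, so both are finite and simple.

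The main obstacle is precisely the structure-theoretic input of the third paragraph: proving that the annihilation algebra of a finite Lie conformal algebra is linearly compact, setting up the equivalence between simplicity of $\mathcal{R}$ and $T$-irreducibility of $(\mathcal{A},T)$ cleanly, and --- above all --- the classification of simple linearly compact Lie algebras and the determination of which of them occur here under the finite-rank-over-$\mathbb{C}[[t]]$ constraint. Everything else (the torsion reduction, the rank-one computation, and the two converse checks) is elementary.
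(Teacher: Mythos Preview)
The paper does not supply its own proof of this proposition: it is quoted verbatim as the classification theorem of D'Andrea and Kac \cite{DK} and is used as a black box to feed into Corollary~\ref{cor4.7} and Theorem~\ref{tm4.8}. So there is nothing in the paper to compare your argument against line by line.

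That said, your outline is essentially the strategy of the original D'Andrea--Kac paper: kill torsion by showing it is central, handle rank one by a direct computation with $P(\partial,\lambda)$, and for higher rank pass to the annihilation Lie algebra $(\mathcal{A},T)$ and invoke the structure theory of linearly compact Lie algebras. Two cautions are in order. First, the annihilation algebra of a simple finite Lie conformal algebra is \emph{not} itself simple in general (e.g.\ $\mathcal{G}[[t]]$ has the closed ideal $t\mathcal{G}[[t]]$), so the relevant input is not the bare Cartan classification of simple linearly compact Lie algebras but rather the Cartan--Guillemin theorem guaranteeing a maximal open subalgebra of finite codimension, followed by a delicate analysis of the associated graded algebra and the possible positions of a Virasoro-type element; your dichotomy ``Cartan-type simple versus current-type'' compresses several nontrivial steps. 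Second, even the rank-one step requires a bit more care than you indicate: skew-symmetry alone does not pin down $P$, and one must use Jacobi to bound the degree in $\lambda$ before normalizing. You correctly flag the structure-theoretic input as the real obstacle; filling it in is the bulk of \cite{DK}, and there is no shortcut known.
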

It was also shown in \cite{DK} that conformal derivations on $Vir$ and all simple current Lie conformal algebras $Cur\mathcal{G}$ are as follows.\begin{proposition}
	(1) Every conformal derivation of  $Vir$ is inner. \\
	(2) For a finite dimensional simple Lie algebra $\mathcal{G}$, every conformal derivation of $Cur\mathcal{G}$ is of the form $p(\partial)d^L+d,$ where $d$ is an inner conformal derivation and $d^L$ is as in Example \ref{ex2.6}.
\end{proposition}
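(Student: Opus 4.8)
The plan is to handle the two families of finite simple Lie conformal algebras given by the preceding classification separately, in each case determining a conformal derivation from its values on a set of free $\mathbb{C}[\partial]$-generators and using conformal sesquilinearity to turn the derivation identity into an ordinary polynomial identity.

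For $Vir=\mathbb{C}[\partial]L$: a conformal derivation $\phi_x$ is determined by $\phi_x(L)=f(\partial,x)L$ with $f\in\mathbb{C}[\partial,x]$. First I would substitute this into $\phi_x([L_\lambda L])=[\phi_x(L)_{\lambda+x}L]+[L_\lambda\phi_x(L)]$, apply $[(g(\partial)L)_\lambda L]=g(-\lambda)[L_\lambda L]$ and $[L_\lambda(g(\partial)L)]=g(\partial+\lambda)[L_\lambda L]$, and then specialize to $\lambda=0$ — this is the step that makes everything collapse. The terms involving $\partial f$ on the two sides cancel, leaving
\[x\,f(\partial,x)=f(-x,x)\,(\partial+2x).\]
Since $\mathbb{C}[\partial,x]$ is a UFD and $\partial+2x$ is irreducible and coprime to $x$, this forces $(\partial+2x)\mid f$; writing $f=(\partial+2x)h_1(\partial,x)$ and feeding it back in gives $h_1(\partial,x)=h_1(-x,x)$, so $h_1$ is independent of $\partial$, i.e. $f(\partial,x)=h(x)(\partial+2x)$ for some $h\in\mathbb{C}[x]$. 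But then $\phi_x(L)=h(x)[L_x L]=[(h(-\partial)L)_x L]=(ad(h(-\partial)L))_x(L)$, and since $Vir=\mathbb{C}[\partial]L$ and both maps are conformal linear, $\phi=ad(h(-\partial)L)\in CInn(Vir)$.

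For $Cur\mathcal{G}=\mathbb{C}[\partial]\otimes\mathcal{G}$: given a conformal derivation $\phi_x$, I would expand $\phi_x(a)=\sum_{k\geq0}\partial^k c_k(x,a)$ for $a\in\mathcal{G}$, where each $c_k(x,-)\colon\mathcal{G}\to\mathbb{C}[x]\otimes\mathcal{G}$ is $\mathbb{C}$-linear and all but finitely many $c_k$ vanish. On $\mathcal{G}$ the derivation identity becomes $\phi_x([a,b])=[\phi_x(a)_{x+\mu}b]+[a_\mu\phi_x(b)]$ for all $\mu$; since the left side is $\mu$-independent I would differentiate in $\mu$ and evaluate at $\mu=-x$, which (using the same sesquilinearity rules) yields
\[[c_1(x,a),b]=\sum_{k\geq1}k\,(\partial-x)^{k-1}[a,c_k(x,b)].\]
Comparing $\partial$-degrees: the left side has $\partial$-degree $0$, so if $N:=\max\{k:c_k\not\equiv0\}\geq2$ then the coefficient of $\partial^{N-1}$ on the right, namely $N[a,c_N(x,b)]$, must vanish for all $a,b$, forcing $c_N(x,b)\in Z(\mathcal{G})=0$ — a contradiction. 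Hence $\phi_x(a)=\partial\,c_1(x,a)+c_0(x,a)$, and the same identity with $N=1$ gives $[c_1(x,a),b]=[a,c_1(x,b)]$, i.e. each $c_1(x,-)$ lies in the quasicentroid $QC(\mathcal{G})=\{T\in\mathrm{End}_{\mathbb{C}}(\mathcal{G}):[Ta,b]=[a,Tb]\ \forall a,b\}$. Invoking the standard fact that $QC(\mathcal{G})=\mathbb{C}\cdot\mathrm{id}$ for a finite-dimensional simple Lie algebra over $\mathbb{C}$ (a consequence of the irreducibility of the adjoint representation), we get $c_1(x,a)=p(-x)\,a$ for some $p\in\mathbb{C}[\partial]$. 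Then $\psi:=\phi-p(\partial)d^L$ is again a conformal derivation — because $CDer(Cur\mathcal{G})$ is a $\mathbb{C}[\partial]$-submodule of $gc(Cur\mathcal{G})$ and $d^L\in CDer(Cur\mathcal{G})$ by Example \ref{ex2.6} — and by the previous paragraph the $\partial$-terms cancel, so $\psi_x$ maps $\mathcal{G}$ into $\mathbb{C}[x]\otimes\mathcal{G}$; writing $\psi_x(a)=e(x,a)$, the brackets on the right of the derivation identity become $\mu$-independent and it collapses to $e(x,[a,b])=[e(x,a),b]+[a,e(x,b)]$, so $e(x,-)$ is an ordinary derivation of $\mathcal{G}$ for each $x$. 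By Whitehead's lemma $e(x,-)=ad(z(x))$ with $z(x)\in\mathbb{C}[x]\otimes\mathcal{G}$ (polynomial in $x$, since $ad\colon\mathcal{G}\to\mathrm{Der}(\mathcal{G})$ is a linear isomorphism), and writing $z(x)=\sum_j r_j(x)z_j$ one checks that $\psi$ and $ad(\sum_j r_j(-\partial)z_j)$ are conformal linear maps agreeing on the $\mathbb{C}[\partial]$-generating set $\mathcal{G}$, hence equal. Therefore $\psi$ is inner and $\phi=p(\partial)d^L+\psi$ has the asserted form.

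The sesquilinearity bookkeeping is routine, and the only structural inputs I would isolate and cite rather than reprove are that the quasicentroid of a simple $\mathcal{G}$ is scalar and that $\mathrm{Der}(\mathcal{G})=ad(\mathcal{G})$. The real difficulty is organizational — keeping straight which formal variable ($\lambda$, $\mu$, or $x$) plays which role in each conformal identity; once the right specialization is found ($\lambda=0$ in the first case, $\partial_\mu$ evaluated at $\mu=-x$ in the second), the polynomial identities reduce everything to divisibility in $\mathbb{C}[\partial,x]$ and to classical facts about simple Lie algebras.
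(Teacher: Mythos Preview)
The paper does not supply its own proof of this proposition: it is stated as a result quoted from \cite{DK}, so there is no argument in the paper to compare yours against. Your proof is correct and is essentially the standard computation. For $Vir$, the specialization $\lambda=0$ legitimately reduces the derivation identity to $x\,f(\partial,x)=f(-x,x)(\partial+2x)$, and the divisibility step forces $f(\partial,x)=h(x)(\partial+2x)$; since this already identifies $\phi$ with the inner derivation $ad(h(-\partial)L)$, no information is lost by having specialized $\lambda$. For $Cur\mathcal{G}$, the $\partial_\mu|_{\mu=-x}$ trick correctly yields $[c_1(x,a),b]=\sum_{k\ge1}k(\partial-x)^{k-1}[a,c_k(x,b)]$, and the degree comparison together with $Z(\mathcal{G})=0$ forces $N\le1$; the fact $QC(\mathcal{G})=\mathbb{C}\cdot\mathrm{id}$ you invoke is precisely Lemma~6.3 of \cite{DK} (also used elsewhere in the paper), and the final reduction of $\psi$ to an ordinary derivation of $\mathcal{G}$, hence inner, is sound.
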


Together with Theorem \ref{tm4.4} and the above two propositions, we immediately obtain the following results .

\begin{corollary}\label{cor4.7}
	(1) $CTDer(Vir)= CInn(Vir)$.\\
	(2) $CTDer(Cur\mathcal{G})= \{ \phi_x \in gc(Cur\mathcal{G}) | \phi_x(a)= f(x)(\partial+ x)a+ d_x(a), \  where \ a\in \mathcal{G}, \ f(x)\in \mathbb{C}[x] \ and \ d_x \in CInn(Cur\mathcal{G}) \}$, where $\mathcal{G}$ is a finite dimensional simple Lie algebra.
\end{corollary}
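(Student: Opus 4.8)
The plan is to read off both statements from Theorem~\ref{tm4.4} combined with the two propositions just quoted. Since $Vir$ and every $Cur\mathcal{G}$ with $\mathcal{G}$ a finite-dimensional simple Lie algebra are finite simple Lie conformal algebras, Theorem~\ref{tm4.4} gives $CTDer(Vir)=CDer(Vir)$ and $CTDer(Cur\mathcal{G})=CDer(Cur\mathcal{G})$. So the whole corollary reduces to rewriting $CDer$ in each case using the description supplied by the second proposition above.

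Part (1) is then immediate: that proposition states that every conformal derivation of $Vir$ is inner, i.e. $CDer(Vir)=CInn(Vir)$, and substituting this into $CTDer(Vir)=CDer(Vir)$ yields $CTDer(Vir)=CInn(Vir)$.

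For part (2), I would start from $CDer(Cur\mathcal{G})=\{p(\partial)d^L+d : p(\partial)\in\mathbb{C}[\partial],\ d\in CInn(Cur\mathcal{G})\}$, with $d^L$ as in Example~\ref{ex2.6}, and show this set equals the set in the statement. The one point to verify is how the polynomial coefficient $p(\partial)$ acts: by the $\mathbb{C}[\partial]$-module structure on $gc(Cur\mathcal{G})$ one has $(\partial\phi)_x(a)=-x\,\phi_x(a)$, hence $(p(\partial)\phi)_x(a)=p(-x)\,\phi_x(a)$ for every conformal linear $\phi$. Taking $\phi=d^L$ and using the $x$-variable form $d^L_x(a)=(\partial+x)a$ of Example~\ref{ex2.6}, we get $(p(\partial)d^L)_x(a)=p(-x)(\partial+x)a$. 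Putting $f(x):=p(-x)$, which ranges over all of $\mathbb{C}[x]$ as $p$ ranges over $\mathbb{C}[\partial]$, identifies $p(\partial)d^L+d$ with the map $\phi_x$ satisfying $\phi_x(a)=f(x)(\partial+x)a+d_x(a)$ on generators $a\in\mathcal{G}$, where $f(x)\in\mathbb{C}[x]$ and $d_x\in CInn(Cur\mathcal{G})$. Thus the two descriptions of $CDer(Cur\mathcal{G})=CTDer(Cur\mathcal{G})$ coincide, which is (2).

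I do not expect a genuine obstacle: all the real work is already done in Theorem~\ref{tm4.4} and the cited propositions, and the remaining task is the elementary bookkeeping of translating the coefficient polynomial $p(\partial)$, acting through the $gc$-module structure, into the polynomial $f(x)$ of the stated normal form. It is also worth recording explicitly, for the ``$\supseteq$'' inclusion, that $d^L$ and inner derivations are honest conformal derivations and hence lie in $CTDer(Cur\mathcal{G})$; only the ``$\subseteq$'' direction actually invokes the classification.
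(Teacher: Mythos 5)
Your proposal is correct and follows exactly the route the paper intends: the paper derives this corollary "immediately" from Theorem~\ref{tm4.4} together with the D'Andrea--Kac classification of finite simple Lie conformal algebras and their conformal derivations, which is precisely your argument. Your extra bookkeeping step translating $p(\partial)d^L$ into $f(x)(\partial+x)$ via $(\partial\phi)_x(a)=-x\phi_x(a)$ is the right (and only) computation needed to match the stated normal form.
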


Inspired by Lemma \ref{lm3.1} and Corollary \ref{cor4.7}, we give the classification of generalized conformal triple derivations on a finite simple Lie conformal algebra.

\begin{theorem}\label{tm4.8}
	(1) $GCTDer(Vir)= CTDer(Vir)= CInn(Vir)$.\\
	(2) $GCTDer(Cur\mathcal{G})= \{\phi_x \in gc(Cur\mathcal{G}) | \phi_x(a)= (f(x)\partial+ g(x))a+ d_x(a), \ where \ a\in \mathcal{G}, \ f(x), g(x)\in \mathbb{C}[x] \ and \ d_x \in CInn(Cur\mathcal{G}) \}$, where $\mathcal{G}$ is a finite dimensional simple Lie algebra.
\end{theorem}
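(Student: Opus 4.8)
The strategy is to bootstrap from Corollary~\ref{cor4.7} and Proposition~\ref{pro3.33} together with a computation of the conformal triple centroid. The organizing principle is the identity $GCTDer(\mathcal{R})=CTDer(\mathcal{R})+TC(\mathcal{R})$, valid for any finite Lie conformal algebra: if $\phi_x\in GCTDer(\mathcal{R})$ with related conformal triple derivation $\tau_x$, then $\phi_x-\tau_x\in TC(\mathcal{R})$ by Remark~\ref{rm3.5}, so $\phi_x=\tau_x+(\phi_x-\tau_x)$; conversely, adding the defining identity (\ref{eq3.1}) for a $\tau_x\in CTDer(\mathcal{R})$ to the identity (\ref{eq3.3}) for a $\rho_x\in TC(\mathcal{R})$ shows that $\tau_x+\rho_x$ satisfies (\ref{eq3.2}) with related conformal triple derivation $\tau_x$, hence lies in $GCTDer(\mathcal{R})$. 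So the whole theorem reduces to identifying $TC(\mathcal{R})$ for $\mathcal{R}=Vir$ and $\mathcal{R}=Cur\mathcal{G}$. Part~(1) is then immediate: $TC(Vir)=0$ by Proposition~\ref{pro3.33}, so $GCTDer(Vir)=CTDer(Vir)=CInn(Vir)$, the last equality being Corollary~\ref{cor4.7}(1).

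For part~(2) the crux is to show $TC(Cur\mathcal{G})=\{\rho_x\in gc(Cur\mathcal{G})\mid \rho_x(a)=g(x)a\ \text{for}\ a\in\mathcal{G},\ g(x)\in\mathbb{C}[x]\}$, where $\rho_x$ is then extended to $Cur\mathcal{G}$ by conformal linearity. Since in $Cur\mathcal{G}$ one has $[a_\mu b]=[a,b]$ for $a,b\in\mathcal{G}$ and $[(p(\partial)a)_\mu b]=p(-\mu)[a,b]$, for $\rho_x\in TC(Cur\mathcal{G})$ and $a,b,c\in\mathcal{G}$ the right-hand side $[[\rho_x(a)_{\lambda+x}b]_{\lambda+\mu+x}c]$ of (\ref{eq3.3}) is obtained from $\rho_x(a)$ by the substitution $\partial\mapsto-(\lambda+x)$ followed by bracketing with $b$ and then $c$ inside $\mathcal{G}$; it therefore lies in $\mathbb{C}[\lambda,x]\otimes\mathcal{G}$ and carries no $\partial$, whereas the left-hand side $\rho_x([[a,b],c])$ lies in $\mathbb{C}[x,\partial]\otimes\mathcal{G}$ and carries no $\lambda$. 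Fixing a $\mathbb{C}$-basis of $\mathcal{G}$ and matching these two expressions forces both to lie in $\mathbb{C}[x]\otimes\mathcal{G}$; using $[[\mathcal{G},\mathcal{G}],\mathcal{G}]=\mathcal{G}$ (which holds since $\mathcal{G}$ is simple) this gives $\rho_x(\mathcal{G})\subseteq\mathbb{C}[x]\otimes\mathcal{G}$, after which (\ref{eq3.3}) collapses to $\rho_x([[a,b],c])=[[\rho_x(a),b],c]$ in $\mathbb{C}[x]\otimes\mathcal{G}$. This says the $\mathbb{C}[x]$-linear extension of $\rho_x$ is a triple centroid of the $\mathbb{C}[x]$-Lie algebra $\mathbb{C}[x]\otimes\mathcal{G}$; since $\mathcal{G}$ is finite-dimensional central simple over $\mathbb{C}$, its triple centroid coincides with its centroid, which is $\mathbb{C}[x]\cdot\mathrm{id}$, giving $\rho_x(a)=g(x)a$. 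The reverse inclusion, that each such $\rho_x$ lies in $TC(Cur\mathcal{G})$, is a direct check; it suffices to verify (\ref{eq3.3}) on the generators $\mathcal{G}$, since both sides of (\ref{eq3.3}) are conformal-linear in each of $a,b,c$ up to the obvious scalar shifts.

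Combining this with Corollary~\ref{cor4.7}(2): an element of $GCTDer(Cur\mathcal{G})=CTDer(Cur\mathcal{G})+TC(Cur\mathcal{G})$ sends $a\in\mathcal{G}$ to $f(x)(\partial+x)a+d_x(a)+g(x)a=f(x)\partial a+(xf(x)+g(x))a+d_x(a)$, and since $(f,g)\mapsto(f,\,xf+g)$ is a bijection of $\mathbb{C}[x]^2$, this is exactly the asserted form $\phi_x(a)=(f(x)\partial+g(x))a+d_x(a)$, and every such map arises in this way. The expected main obstacle is precisely the determination of $TC(Cur\mathcal{G})$: the variable-separation argument that kills the $\partial$-dependence of a conformal triple centroid, and the reduction of the triple centroid of a finite-dimensional central simple Lie algebra to the scalars. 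Everything else is bookkeeping with the already-established Corollary~\ref{cor4.7} and Proposition~\ref{pro3.33}.
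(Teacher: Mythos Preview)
Your proposal is correct and follows essentially the same route as the paper: both reduce $GCTDer$ to $CTDer+TC$ via Remark~\ref{rm3.5}, invoke Proposition~\ref{pro3.33} and Corollary~\ref{cor4.7} for part~(1), and for part~(2) use the same variable-separation argument (left side carries $\partial$, right side carries $\lambda$) to force $\rho_x(\mathcal{G})\subseteq\mathbb{C}[x]\otimes\mathcal{G}$ before reducing to a centroid computation on $\mathcal{G}$. The only substantive difference is in that last reduction: where you assert that the triple centroid of a central simple Lie algebra coincides with its centroid, the paper instead invokes Remark~\ref{rm3.3}(1)(iii) to obtain $[[a,b],f_x(c)]=[[a,f_x(b)],c]$ directly, whence $[f_x(a),b]=[a,f_x(b)]$ since $Z(\mathcal{G})=0$, and then cites Lemma~6.3 of \cite{DK} to conclude each coefficient is a scalar multiple of the identity---so your abstract claim is correct, but the paper's concrete derivation via Remark~\ref{rm3.3} is the intended justification.
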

\begin{proof}
	(1) It can be immediately obtained from Remark \ref{rm3.5}, Proposition \ref{pro3.33} and Corollary \ref{cor4.7}.
	
	(2) Inspired by Remark \ref{rm3.5}, we only need to figure out $TC(Cur\mathcal{G})$. For any $\phi_x \in TC(Cur\mathcal{G})$, since $Cur\mathcal{G}$ is a free $\mathbb{C}[\partial]$-module of finite rank, we assume that $\phi_x(a)=\sum_{i=0}^n\partial^i f_x^i(a)$ for every $a \in \mathcal{G}$, where $f_x^i$ are $\mathbb{C}$-linear maps from $\mathcal{G}$ to $\mathcal{G}[x]$. By (\ref{eq3.3}), for any $a, b, c \in \mathcal{G}$, we obtain that
	\begin{align*}
	\sum_{i= 0}^n\partial^i f_x^i([[a, b], c])= \sum_{i= 0}^n(- \lambda- x)^i [[f_x^i(a), b], c].
	\end{align*}
	Obviously, we can deduce that $n= 0$ by comparing the coefficient of $\partial$ in the above equation. Thus, we set $\phi_x(a)= f_x(a)$, for every $a \in \mathcal{G}$, where $f_x$ is a $\mathbb{C}$-linear map from $\mathcal{G}$ to $\mathcal{G}[x]$. According to the conclusion (iii) of (1) in Remark \ref{rm3.3}, we also have
	\begin{align}\label{4.81}
	f_x([[a, b], c])= [[f_x(a), b], c]= [[a, f_x(b)], c]= [[a, b], f_x(c)].
	\end{align}
	Since $\mathcal{G}$ is a finite
dimensional simple Lie algebra, then $Z(\mathcal{G})= 0$. By (\ref{4.81}), we have $[([f_x(a), b]- [a, f_x(b)]), c]= 0$, for any $c \in \mathcal{G}$. Thus, we have
	\begin{align}\label{4.82}
	[f_x(a), b]= [a, f_x(b)]
	\end{align}
	for any $a, b \in \mathcal{G}$.
	Suppose that $f_x(a)= \sum_{i= 0}^m x^i f_i(a)$ for every $a \in \mathcal{G}$, where $f_i$ are $\mathbb{C}$-linear maps from $\mathcal{G}$ to $\mathcal{G}$. Plugging $f_x(a)$ into (\ref{4.82}), we get
	\begin{align}\label{4.83}
	[f_i(a), b]= [a, f_i(b)]
	\end{align}
	for all $i \in \{0, 1, ..., m\}$. By Lemma 6.3 in \cite{DK}, we can deduce that $f_i= k_iId_{\mathcal{G}}$, where $k_i \in \mathbb{C}$. Hence, we have $\phi_x(a)= g(x)a$ for any $g(x) \in \mathbb{C}[x]$.
	
	Thus, (2) can be directly obtained by Theorem \ref{tm4.8} and the above discussion.
	
	This completes the proof.
\end{proof}

\section{Classification of triple homomorphisms on simple Lie conformal algebras}

In this section, we first introduce the definition of triple homomorphism of a Lie conformal algebra. Then the triple homomorphisms of all finite simple Lie conformal algebras are also characterized.

\begin{definition}\label{def5.1}
\begin{em}
Let $\mathcal{A}, \mathcal{B}$ be two Lie conformal algebras. A $\mathbb{C}[\partial]$-module homomorphism $f: \mathcal{A} \to \mathcal{B}$ is called:
\begin{enumerate}
			\item a \emph{homomorphism} if it satisfies $f([a_\lambda b])= [f(a)_\lambda f(b)]$ for any $a, b \in \mathcal{A}$.
			\item an \emph{anti-homomorphism} if it satisfies $f([a_\lambda b])= -[f(a)_\lambda f(b)]$ for any $a,b \in \mathcal{A}$.
			\item a \emph{triple homomorphism} if it satisfies $f([a_\lambda [b_\mu c]])= [f(a)_\lambda [f(b)_\mu f(c)]]$ for any $a, b, c \in \mathcal{A}$.\end{enumerate}
\end{em}
\end{definition}

\begin{remark}

	\em{It is not difficult to see that both homomorphisms and anti-homomorphisms of Lie conformal algebra are triple homomorphisms, but the converse does not always hold. Moreover, the sum of a homomorphism and an anti-homomorphism is also a triple homomorphism. Actually, there exists some other connection between them which is not just inclusion relationship.}
\end{remark}
\begin{remark}\label{rm5.2}
	\em{Obviously, due to skew-symmetry of Lie conformal algebra, the following two equalities are equivalent:
		\begin{enumerate}
			\item $f([a_\lambda [b_\mu c]])= [f(a)_\lambda [f(b)_\mu f(c)]]$.
			\item $f([[a_\lambda b]_{\lambda+ \mu} c])= [[f(a)_\lambda f(b)]_{\lambda+ \mu} f(c)]$.
		\end{enumerate}}
\end{remark}

Let $\mathcal{A}$ be a Lie conformal algebra. For a subset $S$ of $\mathcal{A}$, the \emph{enveloping Lie conformal algebra} of $S$ is by definition the Lie conformal subalgebra of $\mathcal{A}$ generated by $S$. A Lie conformal algebra is called \emph{indecomposable} if it cannot be written as a direct sum of two nontrivial ideals.

\begin{definition}\label{def5.2}
	\em{
		Suppose that $\mathcal{A} ~and~ \mathcal{B}$ are two Lie conformal algebras. A $\mathbb{C}[\partial]$-module homomorphism $f: \mathcal{A} \to \mathcal{B}$ is called a \emph{direct sum} of $f_1$ and $f_2$, if $f= f_1+ f_2$ and there exists ideals $I_1$, $I_2$ of the enveloping Lie conformal algebra of $f(\mathcal{A})$ such that $I_1\cap I_2= 0$ and $f_1(\mathcal{A}) \subseteq I_1$, $f_2(\mathcal{A}) \subseteq I_2$.}
		
\end{definition}

Hereafter, we always assume that $\mathcal{A} ~and~ \mathcal{B}$ are Lie conformal algebras and $f$ is a triple homomorphism from $\mathcal{A}$ to $\mathcal{B}$. And denote by $E$ the enveloping Lie conformal algebra of $f(\mathcal{A})$. Moreover, we suppose that $\mathcal{A}$ is a finite simple Lie conformal algebra, and $E$ is centerless and can be decomposed into a direct sum of indecomposable ideals.

\begin{lemma}\label{lm5.4}
	Let $\mathcal{A}$ be a finite simple Lie conformal algebra and $\mathcal{B}$ a Lie conformal algebra, there exists a  homomorphism $\delta_f: \mathcal{A} \to \mathcal{B}$ such that for all $a \in \mathcal{A}$ one has $fada_x= ad \delta_f(a)_xf$.
\end{lemma}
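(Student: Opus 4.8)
The plan is to mimic the construction of $\delta_{\phi_x}$ from Lemma \ref{lm4.2}, adapting it from conformal triple derivations to triple homomorphisms. Since $\mathcal{A}$ is a finite simple Lie conformal algebra, every element $a \in \mathcal{A}$ can be written as $a = \sum_{i\in I,\,j\in J}\frac{d^i}{d\lambda^i}[{a_{ij}^1}_\lambda {a_{ij}^2}]|_{\lambda=0}$ for suitable finite index sets and elements of $\mathcal{A}$; the natural candidate is
\begin{align*}
\delta_f(a) = \sum_{i\in I,\,j\in J}\frac{d^i}{d\lambda^i}[f(a_{ij}^1)_\lambda f(a_{ij}^2)]\big|_{\lambda=0}.
\end{align*}
First I would check that $\delta_f$ is well-defined, i.e. independent of the chosen expression for $a$. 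Writing $A$ and $B$ for the values obtained from two such expressions of the same $a$, and using that $f$ is a triple homomorphism (in the form of Remark \ref{rm5.2}), one computes that for every $c \in \mathcal{A}$ the brackets $[A_{\mu}f(c)]$ and $[B_{\mu}f(c)]$ both equal $f([a_\mu c])$ expanded via the triple homomorphism property; hence $[(A-B)_\mu f(c)] = 0$ for all $c$. Since $f(\mathcal{A})$ generates $E$ and $E$ is centerless, this forces $A = B$.

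Next I would verify that $\delta_f$ is a $\mathbb{C}[\partial]$-module homomorphism: using $\partial a = \sum \frac{d^i}{d\lambda^i}([{\partial a_{ij}^1}_\lambda a_{ij}^2] + [{a_{ij}^1}_\lambda \partial a_{ij}^2])|_{\lambda=0}$ together with conformal sesquilinearity of the $\lambda$-bracket in $\mathcal{B}$ and the fact that $f$ commutes with $\partial$, a short computation (parallel to the corresponding step in Lemma \ref{lm4.2}, but now with no shift variable $x$ and no $(\partial+\lambda)$-type correction since $f$ is a genuine $\mathbb{C}[\partial]$-module map) shows $\delta_f(\partial a) = \partial\,\delta_f(a)$. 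Then the defining identity $f\,ada_x = ad\,\delta_f(a)_x\,f$ is essentially built into the construction: for $b \in \mathcal{A}$,
\begin{align*}
(ad\,\delta_f(a))_x f(b) = [\delta_f(a)_x f(b)] = \sum_{i\in I,\,j\in J}\frac{d^i}{d\lambda^i}\big[[f(a_{ij}^1)_\lambda f(a_{ij}^2)]_x f(b)\big]\big|_{\lambda=0},
\end{align*}
and expanding $f([a_x b]) = f([(\sum \tfrac{d^i}{d\lambda^i}[{a_{ij}^1}_\lambda a_{ij}^2]|_{\lambda=0})_x b])$ via the triple homomorphism property yields exactly the same expression, so $f(ada_x(b)) = ad\,\delta_f(a)_x f(b)$.

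Finally I would show $\delta_f$ is a homomorphism of Lie conformal algebras, i.e. $\delta_f([a_\lambda b]) = [\delta_f(a)_\lambda \delta_f(b)]$. Here the argument runs as in Lemma \ref{lm4.2}: one shows $ad\,\delta_f([a_\lambda b])$ and $ad([\delta_f(a)_\lambda \delta_f(b)])$ agree as operators on $f(\mathcal{A})$ by applying the already-established intertwining relation and the triple homomorphism identity, and then centerlessness of $E$ (so that $ad$ is injective on $E$, or at least that an element of $E$ killing all of $f(\mathcal{A})$ is zero) upgrades this to equality of the elements themselves. The main obstacle I anticipate is precisely the well-definedness step and the passage from "kills $f(\mathcal{A})$" to "is zero in $E$": this is where the hypotheses that $E$ is centerless and that $f(\mathcal{A})$ generates $E$ are essential, and one must be careful that the relevant elements $A$, $B$, $\delta_f([a_\lambda b])$ genuinely lie in $E$ (they do, being iterated brackets of elements of $f(\mathcal{A})$) so that centerlessness applies. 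The decomposition of $E$ into indecomposable ideals is not needed for this lemma itself but is presumably used in the sequel.
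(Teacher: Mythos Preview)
Your proposal is correct and follows essentially the same route as the paper's proof: the same definition of $\delta_f$, the same well-definedness argument via centerlessness of $E$, the same verification that $\delta_f$ commutes with $\partial$, the same derivation of the intertwining identity from the triple homomorphism property, and the same passage to the homomorphism identity by bracketing against $f(c)$ and invoking $Z(E)=0$. The only cosmetic difference is that the paper checks $[f(c)_\mu(A-B)]=0$ rather than $[(A-B)_\mu f(c)]=0$, and carries out the final homomorphism step as a single explicit chain of equalities rather than phrasing it as equality of adjoint operators; your remark that one must know the relevant elements lie in $E$ so that centerlessness applies is exactly the point the paper uses implicitly.
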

\begin{proof}
	Similar to the discussion of the proof of Lemma \ref{lm4.1}, since $\mathcal{A}$ is a finite simple Lie conformal algebra, there exists some finite index set $I, J \subseteq \mathbb{Z^+}$ and $a_{ij}^1, a_{ij}^2 \in \mathcal{A}$ such that $a= \sum_{i\in I, j\in J}\frac{d^i}{d\lambda^i}[{a_{ij}^1}_\lambda {a_{ij}^2}]|_{\lambda=0}$. Define a $\mathbb{C}$-linear map $\delta_f: \mathcal{A} \to \mathcal{B}$, such that for any $a \in \mathcal{A}$, $\delta_f(a)= \sum_{i\in I, j\in J}\frac{d^i}{d\lambda^i}[f(a_{ij}^1)_{\lambda} f({a_{ij}^2})]|_{\lambda=0}$.

	At first, it is sufficient to prove that  $\delta_f(a)$ is independent of the expression of $a$. For proving it, we take
	\begin{align*}
	A= \sum_{i\in I, j\in J}\frac{d^i}{d\lambda^i}[f(a_{ij}^1)_{\lambda} f({a_{ij}^2})]|_{\lambda=0}, \quad B=\sum_{s\in I^{'}, t\in J^{'}}\frac{d^s}{d\lambda^s}[f(b_{st}^1)_{\lambda} f({b_{st}^2})]|_{\lambda=0},
	\end{align*}
	where $a$ can also be expressed in the form $a= \sum_{s\in I^{'}, t\in J^{'}}\frac{d^s}{d\lambda^s}[{b_{st}^1}_{\lambda} {b_{st}^2}]|_{\lambda=0}$. Since $f$ is a triple homomorphism from $\mathcal{A}$ to $\mathcal{B}$, for any $c \in \mathcal{A}$, we can deduce that
	\begin{align*}
	[f(c)_\mu (A- B)]&= [f(c)_\mu \sum_{i\in I, j\in J}\frac{d^i}{d\lambda^i}[f(a_{ij}^1)_{\lambda} f({a_{ij}^2})]|_{\lambda=0}]]- [f(c)_\mu \sum_{s\in I^{'}, t\in J^{'}}\frac{d^s}{d\lambda^s}[f(b_{st}^1)_{\lambda} f({b_{st}^2})]|_{\lambda=0}]]\\
	&= \sum_{i\in I, j\in J}\frac{d^i}{d\lambda^i}([f(c)_\mu [f(a_{ij}^1)_{\lambda} f({a_{ij}^2})]])|_{\lambda=0}- \sum_{s\in I^{'}, t\in J^{'}}\frac{d^s}{d\lambda^s}([f(c)_\mu [f(b_{st}^1)_{\lambda} f({b_{st}^2})]])|_{\lambda=0}\\
	&= \sum_{i\in I, j\in J}\frac{d^i}{d\lambda^i}(f([c_\mu [{a_{ij}^1}_{\lambda} {a_{ij}^2}]]))|_{\lambda=0}- \sum_{s\in I^{'}, t\in J^{'}}\frac{d^s}{d\lambda^s}(f([c_\mu [{b_{st}^1}_{\lambda} {b_{st}^2}]]))|_{\lambda=0}\\
	&= f([c_\mu \sum_{i\in I, j\in J}\frac{d^i}{d\lambda^i}([{a_{ij}^1}_{\lambda} {a_{ij}^2}])|_{\lambda=0}])- f([c_\mu \sum_{s\in I^{'}, t\in J^{'}}\frac{d^s}{d\lambda^s}([{b_{st}^1}_{\lambda} {b_{st}^2}])|_{\lambda=0}])\\
	&= f([c_\mu a])- f([c_\mu a])\\
	&= 0.
	\end{align*}
	
	Hence, $A- B \in Z(E)$. By the assumption, we have $Z(E)= 0$. Thus, we can obtain that $A= B$. Therefore, $\delta_f$ is well-defined.
	
	Next, we need to prove that $fada_x= ad \delta_f(a)_xf$ for each $a\in \mathcal{A}$. For any $a, b \in \mathcal{A}$, where $a= \sum_{i\in I, j\in J}\frac{d^i}{d\lambda^i}[{a_{ij}^1}_\lambda {a_{ij}^2}]|_{\lambda=0}$, we have
	\begin{align*}
	fada_x(b)&= f([a_x b])\\
	&= f([(\sum_{i\in I, j\in J}\frac{d^i}{d\lambda^i}[{a_{ij}^1}_\lambda {a_{ij}^2}]|_{\lambda=0})_x b])\\
	&= \sum_{i\in I, j\in J}\frac{d^i}{d\lambda^i}[[f({a_{ij}^1})_\lambda f({a_{ij}^2})]_x f(b)]|_{\lambda=0}\\
	&= [(\sum_{i\in I, j\in J}\frac{d^i}{d\lambda^i}[f({a_{ij}^1})_\lambda f({a_{ij}^2})]|_{\lambda=0})_x f(b)]\\
	&= [\delta_f(a)_x f(b)]\\
	&= ad \delta_f(a)_x f(b).
	\end{align*}
	Therefore, by arbitrariness of $b\in \mathcal{A}$, for each $a\in \mathcal{A}$, we get $fada_x= ad \delta_f(a)_xf$.
	
	Finally, we prove that $\delta_{f}$ is a homomorphism, i.e. $\delta_{f}(\partial a)= \partial\delta_{f}(a)$ and $\delta_f([a_\lambda b])= [\delta_f(a)_\lambda \delta_f(b)]$ for any $a, b \in \mathcal{A}$. For any $a= \sum_{i\in I, j\in J}\frac{d^i}{d\lambda^i}[{a_{ij}^1}_\lambda {a_{ij}^2}]|_{\lambda=0} \in \mathcal{A}$, we have $\partial a= \sum_{i\in I, j\in J}\frac{d^i}{d\lambda^i}([{\partial a_{ij}^1}_\lambda {a_{ij}^2}] + [{a_{ij}^1}_\lambda \partial {a_{ij}^2}])|_{\lambda=0}$. Thus, we obtain that
	\begin{align*}
	\delta_{f}(\partial a)&= \sum_{i\in I, j\in J}\frac{d^i}{d\lambda^i}([f(\partial a_{ij}^1)_{\lambda} f({a_{ij}^2})]+ [f({a_{ij}^1})_\lambda f({\partial a_{ij}^2})])|_{\lambda=0}\\
	&= \sum_{i\in I, j\in J}\frac{d^i}{d\lambda^i}(-\lambda [f(a_{ij}^1)_{\lambda} f({a_{ij}^2})]+ (\partial + \lambda)[f({a_{ij}^1})_\lambda f(a_{ij}^2)])|_{\lambda=0}\\
	&= \partial\sum_{i\in I, j\in J}\frac{d^i}{d\lambda^i}([f(a_{ij}^1)_{\lambda} f({a_{ij}^2})])|_{\lambda=0}\\
	&= \partial\delta_{f}(a).
	\end{align*}
	Thus, $\delta_{f}$ is a $\mathbb{C}[\partial]$-module homomorphism. For any $a,b,c \in \mathcal{A}$, we  obtain that
	\begin{align*}
	[&(\delta_{f}([a_\lambda b])- [\delta_{f}(a)_\lambda \delta_f(b)])_{\lambda+ \mu} f(c)]\\
	&= [\delta_{f}([a_\lambda b])_{\lambda+ \mu} f(c)]- [\delta_{f}(a)_{\lambda} [\delta_f(b)_{\mu} f(c)]]+ [\delta_f(b)_{\mu} [\delta_{f}(a)_\lambda f(c)]]\\
	&= [[f(a)_\lambda f(b)]_{\lambda+ \mu} f(c)]- [\delta_{f}(a)_{\lambda} (ad \delta_f(b)_\mu f(c))]+ [\delta_f(b)_{\mu}(ad \delta_f(a)_\lambda f(c))]\\
	&= [[f(a)_\lambda f(b)]_{\lambda+ \mu} f(c)]- [\delta_{f}(a)_{\lambda} (fadb_\mu c)]+ [\delta_f(b)_{\mu}(fada_\lambda c)]\\
	&= f([[a_\lambda b]_{\lambda+ \mu} c])- [\delta_{f}(a)_{\lambda} f([b_\mu c])]+ [\delta_f(b)_{\mu}f([a_\lambda c])]\\
	&= f([[a_\lambda b]_{\lambda+ \mu} c])- ad\delta_{f}(a)_{\lambda} f([b_\mu c])+ ad\delta_f(b)_{\mu}f([a_\lambda c])\\
	&= f([[a_\lambda b]_{\lambda+ \mu} c])- fada_{\lambda}([b_\mu c])+ fadb_{\mu}([a_\lambda c])\\
	&= f([[a_\lambda b]_{\lambda+ \mu} c])- f([a_{\lambda}[b_\mu c]])+ f([b_{\mu}[a_\lambda c]])\\
	&= f([[a_\lambda b]_{\lambda+ \mu} c]- [a_{\lambda}[b_\mu c]]+ [b_{\mu}[a_\lambda c]])\\
	&= 0.
	\end{align*}
	Due to the arbitrariness of $c$, we have $\delta_{f}([a_\lambda b])- [\delta_{f}(a)_\lambda \delta_f(b)] \in Z(E)[\lambda]$. Thus, $\delta_{f}([a_\lambda b])= [\delta_{f}(a)_\lambda \delta_f(b)]$ since $E$ is centerless. Hence, $\delta_{f}$ is a  homomorphism by the arbitrariness of $a, b\in \mathcal{A}$.
	
	This completes the proof.
\end{proof}

There are some good properties on $E$ as shown in the following lemma.
\begin{lemma}\label{lm5.6}
	Denote $E^+= Im(f+ \delta_{f}), E^-= Im(f- \delta_{f})$. Then we obtain the following results:
	\begin{enumerate}
		\item $E^+$ and $E^-$ are both ideals of $E$.
		\item $[{E^+}_\lambda E^-]= 0$.
		\item $E^+ \cap E^-= 0$.
	\end{enumerate}
	In fact, $E$ can be decomposed into a direct sum of ideals $E^+$ and $E^-$.
\end{lemma}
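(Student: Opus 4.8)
The plan is to reduce Lemma \ref{lm5.6} to a single bracket-compatibility identity between $f$ and the homomorphism $\delta_f$ produced by Lemma \ref{lm5.4}, and then to read off (1)--(3) by direct computation on the generating set $f(\mathcal A)$ of $E$. From $f\,\mathrm{ad}\,a_x = \mathrm{ad}\,\delta_f(a)_x\,f$ I first record, for $a,b\in\mathcal A$, that $f([a_\lambda b]) = [\delta_f(a)_\lambda f(b)]$, and, after applying skew-symmetry inside $\mathcal A$ and then inside $E$, also $f([a_\lambda b]) = [f(a)_\lambda \delta_f(b)]$; since $\delta_f$ is a homomorphism we additionally have $[\delta_f(a)_\lambda \delta_f(b)] = \delta_f([a_\lambda b])$. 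The key identity I would establish is
\begin{align}\label{keyid}
[f(a)_\lambda f(b)] \;=\; \delta_f([a_\lambda b]) \;=\; [\delta_f(a)_\lambda \delta_f(b)], \qquad a,b\in\mathcal A.
\end{align}
To prove \eqref{keyid}, bracket the difference $w:=[f(a)_\lambda f(b)]-[\delta_f(a)_\lambda\delta_f(b)]$ against an arbitrary $f(c)$, $c\in\mathcal A$: by Remark \ref{rm5.2} (the triple-homomorphism axiom in bracketed form), $[[f(a)_\lambda f(b)]_{\lambda+\mu}f(c)] = f([[a_\lambda b]_{\lambda+\mu}c])$, whereas expanding $[\delta_f(a)_\lambda\delta_f(b)]=\delta_f([a_\lambda b])$ and applying Lemma \ref{lm5.4} coefficientwise gives $[[\delta_f(a)_\lambda\delta_f(b)]_{\lambda+\mu}f(c)] = f([[a_\lambda b]_{\lambda+\mu}c])$ as well. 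Hence every coefficient of $w$ (a polynomial in $\lambda$ with values in $E$) annihilates all of $f(\mathcal A)$ under the bracket; since $f(\mathcal A)$ generates $E$ and $\{e\in E: [z_\mu e]=0\}$ is a Lie conformal subalgebra of $E$ (by the Jacobi identity), each such coefficient lies in $Z(E)=0$, so $w=0$.

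Granting \eqref{keyid}, part (1) follows because $E$ is generated by $f(\mathcal A)$: to see that the $\mathbb C[\partial]$-submodule $E^{+}$ (resp. $E^{-}$) is an ideal it suffices, propagating along brackets by the Jacobi identity, to check that $[f(a)_\lambda(f\pm\delta_f)(b)]\in E^{\pm}[\lambda]$, and indeed $[f(a)_\lambda(f+\delta_f)(b)] = \delta_f([a_\lambda b])+f([a_\lambda b]) = (f+\delta_f)([a_\lambda b])\in E^{+}[\lambda]$, while $[f(a)_\lambda(f-\delta_f)(b)] = \delta_f([a_\lambda b])-f([a_\lambda b]) = -(f-\delta_f)([a_\lambda b])\in E^{-}[\lambda]$. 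For part (2) I would expand $[(f+\delta_f)(a)_\lambda(f-\delta_f)(b)] = [f(a)_\lambda f(b)] - [f(a)_\lambda\delta_f(b)] + [\delta_f(a)_\lambda f(b)] - [\delta_f(a)_\lambda\delta_f(b)]$ and substitute the four identities above: the first and last terms both equal $\delta_f([a_\lambda b])$ and the middle two both equal $f([a_\lambda b])$ with opposite signs, so the sum telescopes to $0$; by $\mathbb C[\partial]$-bilinearity this gives $[{E^+}_\lambda E^-]=0$. For part (3), take $z\in E^+\cap E^-$; then by (2) and skew-symmetry $[z_\lambda E^+]=[z_\lambda E^-]=0$, and since $f(a)=\tfrac12\big((f+\delta_f)(a)+(f-\delta_f)(a)\big)$ shows $E = E^+ + E^-$, we get $[z_\lambda E]=0$, whence $z\in Z(E)=0$. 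Finally, (1) together with $E=E^++E^-$ and (3) gives the decomposition $E = E^+\oplus E^-$ into a direct sum of ideals.

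The main obstacle is establishing the compatibility identity \eqref{keyid}. The hypotheses only control $f$ on iterated brackets, through the triple-homomorphism axiom, and through $\delta_f$ via Lemma \ref{lm5.4}; so one must run the expression $[f(a)_\lambda f(b)]$ through a Jacobi rearrangement against a third element, keep the spectral parameters $\lambda,\mu,\lambda+\mu$ aligned on both sides of the comparison, and only then cash in the centerlessness of $E$. Once \eqref{keyid} is in hand, parts (1)--(3) and the direct-sum conclusion are routine manipulations on the generating set $f(\mathcal A)$, requiring nothing beyond bilinearity of the bracket and the Jacobi identity.
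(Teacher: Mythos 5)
Your proof is correct, and it reorganizes the argument in a way that differs from the paper's. The paper never isolates your key identity explicitly as something to be proved: since $\delta_f$ is \emph{constructed} in Lemma \ref{lm5.4} by setting $\delta_f$ of a bracket-coefficient equal to the corresponding bracket of $f$-images, the equality $[f(a)_\lambda f(b)]=\delta_f([a_\lambda b])$ holds there by definition and is used silently in the proof of (1); the burden of the centerlessness argument is instead carried in part (2), where the paper brackets $[(f(a)+\delta_f(a))_\lambda(f(b)-\delta_f(b))]$ against an arbitrary $f(c)$ and runs a long Jacobi/skew-symmetry computation before concluding via $Z(E)=0$. You front-load that single appeal to $Z(E)=0$ into the proof of $[f(a)_\lambda f(b)]=[\delta_f(a)_\lambda\delta_f(b)]$ (using only the \emph{statement} of Lemma \ref{lm5.4}, the triple-homomorphism axiom in the form of Remark \ref{rm5.2}, and the fact that the centralizer of an element is a subalgebra containing the generating set $f(\mathcal A)$), after which (1) and (2) collapse to four-term bookkeeping and (2) in particular becomes a pure telescoping cancellation needing no further use of centerlessness. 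What your route buys is independence from the internal construction of $\delta_f$ and a much shorter proof of (2); what the paper's route buys is that no separate lemma-style identity needs to be stated. Parts (3) and the direct-sum conclusion are identical in both treatments (the paper actually defers $E=E^++E^-$ to the proof of Lemma \ref{lm5.7}, whereas you include it here, which is where the statement says it belongs). The only step you leave implicit is the standard fact that the idealizer of the $\mathbb{C}[\partial]$-submodule $E^{\pm}$ is a subalgebra of $E$, so that checking brackets against generators $f(a)$ suffices for (1); the paper leaves exactly the same step implicit, so this is not a gap.
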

\begin{proof}
	(1) Obviously, we have $E^+$, $E^- \subseteq E$. For any $a, b \in \mathcal{A}$, we have
	\begin{align*}
	[(f(a)+ \delta_f(a))_\lambda f(b)]
	&= [f(a)_\lambda f(b)]+ ad \delta_f(a)_\lambda f(b)\\
	&= \delta_f([a_\lambda b])+ fada_\lambda (b)\\
	&= \delta_f([a_\lambda b])+ f([a_\lambda b])\\
	&= (f+ \delta_{f})([a_\lambda b]).
	\end{align*}
	Thus, $E^+$ is an ideal of $E$. Similarly, $E^-$ is also an ideal of $E$.
	
	(2) For any $a, b, c \in \mathcal{A}$, we have 
	\begin{align*}
	[[(f(a)&+ \delta_f(a))_\lambda (f(b)- \delta_f(b))]_{\lambda+ \mu}f(c)]\\
	= &[[f(a)_\lambda f(b)]_{\lambda+ \mu}f(c)]- [[f(a)_\lambda \delta_f(b)]_{\lambda+ \mu}f(c)]\\
	&+ [[\delta_f(a)_\lambda f(b)]_{\lambda+ \mu}f(c)]- [[\delta_f(a)_\lambda \delta_f(b)]_{\lambda+ \mu}f(c)]\\
	= &f([[a_\lambda b]_{\lambda+ \mu}c])+ [[\delta_f(b)_{- \partial- \lambda} f(a)]_{\lambda+ \mu}f(c)]+ [[\delta_f(a)_\lambda f(b)]_{\lambda+ \mu}f(c)]\\
	&- [\delta_f(a)_\lambda [\delta_f(b)_{\mu}f(c)]]+ [\delta_f(b)_{\mu}[\delta_f(a)_\lambda f(c)]]\\
	= &f([[a_\lambda b]_{\lambda+ \mu}c])+ [(ad\delta_f(b)_{-\partial- \lambda} f(a))_{\lambda+ \mu}f(c)]+ [(ad \delta_f(a)_\lambda f(b))_{\lambda+ \mu}f(c)]\\
	&- [\delta_f(a)_\lambda (ad\delta_f(b)_{\mu}f(c))]+ [\delta_f(b)_{\mu}(ad\delta_f(a)_\lambda f(c))]\\
	= &f([[a_\lambda b]_{\lambda+ \mu}c])+ [(fadb_{-\partial- \lambda}(a))_{\lambda+ \mu}f(c)]+ [(fada_\lambda b)_{\lambda+ \mu}f(c)]\\
	&- [\delta_f(a)_\lambda (fadb_{\mu}c)]+ [\delta_f(b)_{\mu}(fada_\lambda c)]\\
	= &f([[a_\lambda b]_{\lambda+\mu}c])+ [f([b_{-\partial- \lambda}a])_{\lambda+\mu}f(c)]+ [f([a_\lambda b])_{\lambda+ \mu}f(c)]\\
	&-ad\delta_f(a)_\lambda f([b_{\mu}c])+ ad \delta_f(b)_{\mu}f([a_\lambda c])\\
	= &f([[a_\lambda b]_{\lambda+ \mu}c])- [f([a_\lambda b])_{\lambda+ \mu}f(c)]+ [f([a_\lambda b])_{\lambda+ \mu}f(c)]\\
	&- fada_\lambda ([b_{\mu}c])+ fadb_{\mu}([a_\lambda c])\\
	= &f([[a_\lambda b]_{\lambda+ \mu}c])- f([a_\lambda [b_{\mu}c]])+ f([b_{\mu}[a_\lambda c]])\\
	= &f([[a_\lambda b]_{\lambda+ \mu}c]- [a_\lambda [b_{\mu}c]]+ [b_{\mu}[a_\lambda c]])\\
	= &0.
	\end{align*}
	Therefore, $[(f(a)+ \delta_f(a))_\lambda (f(b)- \delta_f(b))] \in Z(E)$. Since $Z(E)= 0$, we obtain that $[(f(a)+ \delta_f(a))_\lambda (f(b)- \delta_f(b))]= 0$.
	
	(3) For any $a \in E^+\cap E^-$, we have $[a_\lambda E^+]= [a_\lambda E^-]= 0$ by (2). Thus, for any $b \in \mathcal{A}$, we have $[a_\lambda (f(b)+ \delta_f(b))]= [a_\lambda (f(b)- \delta_f(b))]= 0$. Then, $[a_\lambda f(b)]= 0$, i.e., $a \in Z(E)$. Since $Z(E)= 0$, we can deduce that $a= 0$.
	
	This completes the proof.
\end{proof}
We first consider the simplest situation of $E$ to investigate the triple homomorphism from $\mathcal{A}$ to $\mathcal{B}$.
\begin{lemma}\label{lm5.7}
	If $E$ is indecomposable, then $f$ is either a homomorphism or an anti-homomorphism from $\mathcal{A}$ to $\mathcal{B}$.
\end{lemma}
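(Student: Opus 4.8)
The plan is to combine the two preceding structural lemmas with the indecomposability hypothesis; almost no new computation is needed. Recall from Lemma \ref{lm5.4} that there is a homomorphism $\delta_f\colon\mathcal{A}\to\mathcal{B}$ satisfying $f\,ad\,a_x = ad\,\delta_f(a)_x\,f$ for all $a\in\mathcal{A}$, and recall from Lemma \ref{lm5.6} that, with $E^+ = Im(f+\delta_f)$ and $E^- = Im(f-\delta_f)$, both $E^+$ and $E^-$ are ideals of $E$, they satisfy $[{E^+}_\lambda E^-]=0$ and $E^+\cap E^-=0$, and $E = E^+\oplus E^-$ as an internal direct sum of ideals.

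First I would apply the hypothesis that $E$ is indecomposable to the decomposition $E = E^+\oplus E^-$: since $E$ cannot be written as a direct sum of two nontrivial ideals, one of $E^+$, $E^-$ must be the zero ideal. (The degenerate case $E=0$, i.e. $f(\mathcal{A})=0$, is harmless: then $f$ is simultaneously a homomorphism and an anti-homomorphism.)

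Next I would translate each surviving possibility into a statement about $f$. If $E^- = Im(f-\delta_f) = 0$, then $f(a)=\delta_f(a)$ for every $a\in\mathcal{A}$, hence $f=\delta_f$, which is a homomorphism by Lemma \ref{lm5.4}. If instead $E^+ = Im(f+\delta_f) = 0$, then $f=-\delta_f$, and for any $a,b\in\mathcal{A}$,
\[
f([a_\lambda b]) = -\delta_f([a_\lambda b]) = -[\delta_f(a)_\lambda \delta_f(b)] = -[(-f(a))_\lambda (-f(b))] = -[f(a)_\lambda f(b)],
\]
so $f$ is an anti-homomorphism. In either case the conclusion of the lemma holds.

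I do not anticipate a real obstacle here: the substantive work — constructing the companion homomorphism $\delta_f$ and proving the ideal decomposition $E = E^+\oplus E^-$ with $E^+\cap E^-=0$ — has already been done in Lemmas \ref{lm5.4} and \ref{lm5.6}. The only step needing a moment's attention is the elementary observation that the negative of a Lie conformal algebra homomorphism is an anti-homomorphism, together with disposing of the trivial case $f=0$.
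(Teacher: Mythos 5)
Your proposal is correct and follows essentially the same route as the paper: decompose $E=E^+\oplus E^-$ via Lemma \ref{lm5.6}, invoke indecomposability to kill one summand, and identify $f=\pm\delta_f$ accordingly. The only difference is cosmetic --- you spell out explicitly why $E^+=0$ forces $f=-\delta_f$ to be an anti-homomorphism, a step the paper leaves implicit.
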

\begin{proof}
	For any $a \in \mathcal{A}$, set $a^+= \frac{1}{2}(f(a)+ \delta_f(a)), \ a^-= \frac{1}{2}(f(a)- \delta_f(a))$. Obviously, we have $a^+ \in E^+, a^-\in E^-$ and $f(a)= a^+ + a^-$. Therefore, $E \subseteq E^+ + E^-$. By Lemma \ref{lm5.6}, we have $E = E^+ \oplus  E^-$. Since $E$ is indecomposable, then either $E^+$ or $E^-$ must be trivial. If $E^+$ is trivial, then $f$ is an anti-homomorphism. If $E^-$ is trivial, then $f$ is a homomorphism.
	
	This completes the proof.
\end{proof}
Finally, we put forward the main result in this section by the following theorem.
\begin{theorem}\label{tm5.8}
	Assume that $\mathcal{A}$ is a finite simple Lie conformal algebra, and $E$ is centerless and can be decomposed into a direct sum of indecomposable ideals. Then $f$ is either a homomorphism, an anti-homomorphism or a direct sum of a homomorphism and an anti-homomorphism from $\mathcal{A}$ to $\mathcal{B}$.
\end{theorem}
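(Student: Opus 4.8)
The plan is to reduce the general case to the indecomposable case already handled in Lemma \ref{lm5.7}, by decomposing $E$ and projecting $f$ onto its indecomposable ideals. I would write $E = I_1 \oplus \cdots \oplus I_n$ as a direct sum of indecomposable ideals (this is one of the standing hypotheses) and let $\pi_i \colon E \to I_i$ be the canonical projections. Since $[{I_i}_\lambda I_j] \subseteq I_i \cap I_j = 0$ for $i \neq j$, each $\pi_i$ is a surjective homomorphism of Lie conformal algebras, so $g_i := \pi_i \circ f$ is again a triple homomorphism (the composition of a triple homomorphism with a homomorphism), and $f = g_1 + \cdots + g_n$.

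The first step is to check that each $g_i$ satisfies the hypotheses of Lemma \ref{lm5.7} with ambient algebra $I_i$. Because $\pi_i$ is a surjective homomorphism and $E$ is generated by $f(\mathcal{A})$, the enveloping Lie conformal algebra of $g_i(\mathcal{A}) = \pi_i(f(\mathcal{A}))$ should be exactly $\pi_i(E) = I_i$; this is the routine fact that a surjective homomorphism carries the subalgebra generated by a set onto the subalgebra generated by its image. Next, $I_i$ is centerless: any $z \in Z(I_i)$ also kills every $I_j$ with $j \neq i$, hence lies in $Z(E) = 0$. And $I_i$ is indecomposable by construction. So Lemma \ref{lm5.7} applies and tells me that each $g_i$ is either a homomorphism or an anti-homomorphism.

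Then I would partition $\{1, \dots, n\} = P \sqcup Q$ so that $g_i$ is a homomorphism for $i \in P$ and an anti-homomorphism for $i \in Q$, putting any index for which $g_i$ is both (equivalently $[g_i(\mathcal{A})_\lambda g_i(\mathcal{A})] = 0$) into $P$ by convention. Setting $f_1 = \sum_{i \in P} g_i$ and $f_2 = \sum_{i \in Q} g_i$ gives $f = f_1 + f_2$ with $f_1(\mathcal{A}) \subseteq \bigoplus_{i \in P} I_i$ and $f_2(\mathcal{A}) \subseteq \bigoplus_{i \in Q} I_i$, two ideals of $E$ with trivial intersection. Using $[{I_i}_\lambda I_j] = 0$ for $i \neq j$ to kill the cross terms, the bracket $[f_1(a)_\lambda f_1(b)]$ collapses to $\sum_{i \in P} g_i([a_\lambda b]) = f_1([a_\lambda b])$, so $f_1$ is a homomorphism, and likewise $f_2$ turns out to be an anti-homomorphism; both are $\mathbb{C}[\partial]$-module maps since $f$ and the $\pi_i$ are. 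Thus $f$ is the direct sum of $f_1$ and $f_2$ in the sense of Definition \ref{def5.2}, and the three alternatives in the statement correspond to $Q = \emptyset$, $P = \emptyset$, and the remaining case.

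I expect the only friction to be bookkeeping rather than anything deep: pinning down that the enveloping algebra of $\pi_i(f(\mathcal{A}))$ is all of $I_i$ and that $I_i$ is centerless, so that Lemma \ref{lm5.7} is literally applicable, and treating the degenerate indices where $g_i$ is simultaneously a homomorphism and an anti-homomorphism. As a cross-check, the same conclusion can be extracted directly from Lemma \ref{lm5.6}: the maps $f^{\pm} = \tfrac{1}{2}(f \pm \delta_f)$ are $\mathbb{C}[\partial]$-module maps with $f = f^+ + f^-$ and $f^{\pm}(\mathcal{A}) \subseteq E^{\pm}$, and substituting $f(a) = f^+(a) + f^-(a)$, $\delta_f(a) = f^+(a) - f^-(a)$ into the identity $f([a_\lambda b]) = [\delta_f(a)_\lambda f(b)]$ coming from Lemma \ref{lm5.4} and comparing $E^+$- and $E^-$-components shows directly that $f^+$ is a homomorphism and $f^-$ an anti-homomorphism.
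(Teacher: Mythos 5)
Your argument is essentially the paper's own proof: decompose $E$ into indecomposable ideals, apply Lemma \ref{lm5.7} to each projection $p_if$, and group the summands on which it is a homomorphism versus an anti-homomorphism; you even supply the small verifications (that each indecomposable summand is centerless and is the enveloping algebra of the projected image) that the paper asserts without detail. Your closing cross-check via $f^{\pm}=\tfrac{1}{2}(f\pm\delta_f)$ is also sound and would in fact bypass the decomposition into indecomposables entirely, but it is presented only as a sanity check and does not change the main line of the argument.
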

\begin{proof}
	Obviously, it remains to prove Theorem \ref{tm5.8} in case $E$ is decomposable. By the assumption, $E$ can be written as the sum $E= E_1 \oplus E_2 \oplus ... \oplus E_n$, where each $E_i$ is an indecomposable ideal of $E$. Since $E$ is centerless, each $E_i$ is also centerless by Lemma \ref{lm5.6}.
	
	Let $p_i$ be the projection of $E$ into $E_i$. Then we have $f= \sum_{i= 1}^n p_if$ and $p_if$ is a triple homomorphism from $\mathcal{A}$ to $E_i$, and $E_i$ is the enveloping Lie conformal algebras of $p_if(\mathcal{A})$  for $i= 1, 2, ..., n$. Since each $E_i$ is indecomposable, by Lemma \ref{lm5.7}, we deduce that $p_i f$ is either a homomorphism or an anti-homomorphism from $\mathcal{A}$ to $E_i$. Set $I= \{i|p_if \ is\  a\  homomorphism.\}$ and $J= \{j| p_j f \ is\  an\ anti-homomorphism.\}$. Set $E_I=  \bigoplus_{i \in I} E_i$ and $E_J=\bigoplus_{j \in J} E_j$. Let $f_I=\sum_{i \in I}p_if$ and $f_J= \sum_{j \in J}p_j f$. It is not difficult to check that $E= E_I\oplus E_J$, $[{E_I}_\lambda E_J]= 0$, $f= f_I+ f_J$, where $f_I$ is a homomorphism from $\mathcal{A}$ to $E_I$ and $f_J$ is an anti-homomorphism from $\mathcal{A}$ to $E_J$.
	
	This completes the proof.
\end{proof}
\subsection*{Declaration:} This work is  accepted by the journal in 2021.
\subsection*{Acknowledgment:}This work was supported by the National Natural Science Foundation of China (No. 11871421, 12171129) and the Zhejiang Provincial Natural Science Foundation of China (No. LY20A010022) and the Scientific Research Foundation of Hangzhou Normal University (No. 2019QDL012) and the Fundamental Research Funds for the Central Universities (No. 22120210554).


%
\end{document}